\documentclass[11pt]{article}

\usepackage{latexsym,color,graphicx,amsmath,amssymb,amsthm}

\setlength{\textwidth}{6.0in}
\setlength{\evensidemargin}{0.25in}
\setlength{\oddsidemargin}{0.25in}
\setlength{\textheight}{9.0in}
\setlength{\topmargin}{-0.5in}
\setlength{\parskip}{2mm}
\setlength{\baselineskip}{1.7\baselineskip}

\def\R{{\mathbb R}}

\def\C{{\mathbb C}}

\def\u{{\bf u}}

\def\Log{\text{Log}}
\def\Z{\mathbb{Z}}

\newtheorem{thm}{Theorem}[section]
\newtheorem{cor}[thm]{Corollary}
\newtheorem{lem}[thm]{Lemma}
\newtheorem{clm}[thm]{Claim}

\theoremstyle{definition}

\begin{document}

\title{Expanding polynomials: A generalization of the Elekes-R\'onyai theorem to $d$ variables}

\author{
Orit E. Raz\thanks{%
Mathematics Department, University of British Columbia,
Vancouver BC, Canada.
{\sl oritraz@math.ubc.ca} }
\and
Zvi Shem Tov\thanks{%
Mathematics Department, University of British Columbia,
Vancouver BC, Canada.
{\sl zvishem@math.ubc.ca} }
 }

\maketitle

\abstract{We prove the following statement.
Let $f\in\R[x_1,\ldots,x_d]$, for some $d\ge 3$, and assume that $f$ depends non-trivially in each of $x_1,\ldots,x_d$.
Then one of the following holds.\\
(i) For every finite sets $A_1,\ldots,A_d\subset \R$, each of size $n$, we have
$$|f(A_1\times\ldots\times A_d)|=\Omega(n^{3/2}),
$$
with constant of proportionality that depends on $\deg f$.\\
(ii) $f$ is of one of the forms
\begin{align*}
f(x_1,\ldots, x_d)&=h(p_1(x_1)+\cdots+p_d(x_d))~~\text{or}\\
f(x_1,\ldots, x_d)&=h(p_1(x_1)\cdot\ldots\cdot p_d(x_d)),
\end{align*}
for some univariate real polynomials $h(x)$, $p_1(x),\ldots,p_d(x)$.
This generalizes the results from \cite{ER00,RSS1, RSdZ2}, which treat the cases $d=2$ and $d=3$.
}
\section{Introduction}

Let $A,B$ be two sets, each of $n$ real numbers, and let $f$ be a real bivariate polynomial of constant degree. Elekes and R\'onyai~\cite{ER00} showed that if $|f(A\times B)|\le cn$, for some constant $c$ that depends only on $\deg f$, and for $n\ge n_0(c)$, for sufficiently large threshold $n_0(c)$ that depends on $c$, then $f$ must be of one of the special forms $f(x,y)=h(p(x)+q(y))$, or $f(x,y)=h(p(x)\cdot q(y))$, for some univariate polynomials $p,q,h$ over $\R$. Later, Raz, Sharir, and Solymosi~\cite{RSS1} showed that $|f(A\times B)|=\Omega(n^{4/3})$, for every $A,B$ each of size $n$ (with constant of proportionality that depends only on the degree of $f$),  unless $f$ has one of the above mentioned special forms.  
This result was extended by Raz, Sharir, and De Zeeuw in \cite{RSdZ2} to polynomials of three variables. 
Concretely, it is proved in \cite{RSdZ2} that, for a real trivariate polynomial $f$,  either
$|f(A\times B\times C)|=\Omega(n^{3/2})$ for every $A,B,C\subset \R$ each of size $n$, or $f$ is of one of the special forms $f(x,y,z)=h(p(x)+q(y)+r(z))$ or $f(x,y,z)=h(p(x)\cdot q(y)\cdot r(z))$.

The main result of this paper is extending this line of results to polynomials with arbitrary number of variables. We state a combined version for polynomials over $\C$ and over $\R$.

\begin{thm}\label{main}
Let $f\in\C[x_1,\ldots,x_d]$, for some $d\ge 3$, and assume that $f$ depends non-trivially in each of $x_1,\ldots,x_d$.
Then one of the following holds.\\
(i) For every finite sets $A_1,\ldots,A_d\subset \C$, each of size $n$, we have
$$|f(A_1\times\ldots\times A_d)|=\Omega(n^{3/2}),
$$
with constant of proportionality that depends on $\deg f$.\\
(ii) $f$ is of one of the forms
\begin{align*}
f(x_1,\ldots, x_d)&=h(p_1(x_1)+\cdots+p_d(x_d))~~\text{or}\\
f(x_1,\ldots, x_d)&=h(p_1(x_1)\cdot\ldots\cdot p_d(x_d)),
\end{align*}
for some univariate complex polynomials $h(x)$, $p_1(x),\ldots,p_d(x)$. Moreover, if $f$ is a real polynomial, then there exist $h(x)$, $p_1(x),\ldots,p_d(x)$ with real coefficients.
\end{thm}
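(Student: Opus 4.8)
The plan is to prove Theorem~\ref{main} by induction on $d$, taking as the base case $d=3$ the result of~\cite{RSdZ2} (over $\C$ this follows from the same argument with incidence bounds over $\C$ in place of those over $\R$). For the inductive step fix $d\ge 4$, assume the theorem for $d-1$ variables, and let $f\in\C[x_1,\ldots,x_d]$ depend non-trivially on each of its variables; it suffices to show that if $f$ is \emph{not} of either special form then $|f(A_1\times\cdots\times A_d)|=\Omega(n^{3/2})$ for all $A_i\subset\C$ of size $n$. Fix such sets, with $n$ above a threshold depending on $\deg f$ and $d$ (for smaller $n$ the bound holds trivially after adjusting the constant, the value set being nonempty).

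For a coordinate $i$ and $a\in\C$ write $f^{(i)}_a\in\C[x_1,\ldots,\widehat{x_i},\ldots,x_d]$ for the polynomial obtained from $f$ by setting $x_i=a$, and note that $f(A_1\times\cdots\times A_d)\supseteq f^{(i)}_a\bigl(\textstyle\prod_{j\ne i}A_j\bigr)$ whenever $a\in A_i$. The crux of the argument is a \emph{structure lemma}, which I expect to be the main obstacle: if $f$ is not of $d$-variable special form, then there is a coordinate $i$ such that, for all but finitely many $a\in\C$, the polynomial $f^{(i)}_a$ depends non-trivially on all of $x_1,\ldots,\widehat{x_i},\ldots,x_d$ and is not of $(d-1)$-variable special form. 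Granting this, pick such an $i$; since $|A_i|=n$ exceeds the exceptional set, some $a\in A_i$ gives $f^{(i)}_a$ non-degenerate and non-special, so the induction hypothesis applied to $f^{(i)}_a$ (which, not being special, must satisfy alternative~(i)) yields $|f^{(i)}_a(\prod_{j\ne i}A_j)|=\Omega(n^{3/2})$, and the displayed containment completes the inductive step.

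It remains to prove the structure lemma, or equivalently its contrapositive: if for \emph{every} coordinate $i$ the set $\{a\in\C:\ f^{(i)}_a\text{ is degenerate or of }(d-1)\text{-variable special form}\}$ is infinite, then $f$ is of $d$-variable special form. For each $i$ the degenerate values of $a$ are finite in number (each $\partial f/\partial x_j$ is a nonzero polynomial, whose restriction to $x_i=a$ vanishes identically for only finitely many $a$), so there is an infinite $S_i\subseteq\C$ over which $f^{(i)}_a$ is of a fixed type $\tau_i\in\{\text{additive},\text{multiplicative}\}$ of special form. One first shows all the $\tau_i$ coincide, using that the two forms are detected by complementary conditions on the logarithmic derivatives $\partial_j f/\partial_k f$ --- for an additively special polynomial each such ratio equals a univariate function of $x_j$ divided by one of $x_k$, with no dependence on the remaining variables, while for a multiplicatively special one the same holds after replacing $\partial f$ by $\partial f/f$ --- and these are incompatible on overlapping variable sets unless $f$ degenerates. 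In the additive case, then, for each pair $j\ne k$ and each $a\in S_i$ with $i\notin\{j,k\}$ the ratio $(\partial_j f/\partial_k f)|_{x_i=a}$ has the stated factored shape; since each $S_i$ is Zariski-dense this lifts to an identity for $\partial_j f/\partial_k f$ itself, and feeding these relations (over all $i,j,k$) back into $f$ forces $f=h(p_1(x_1)+\cdots+p_d(x_d))$ for suitable univariate $h,p_1,\ldots,p_d$; the multiplicative case is parallel. The genuinely delicate points are making this ``lifting'' rigorous and patching together the univariate data attached to the various sections, for which one uses the essential uniqueness of the special-form representation (unique up to a single affine, resp.\ scaling, reparametrization). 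Finally, if $f$ is real and $f=h(p_1(x_1)+\cdots+p_d(x_d))$ with complex $h,p_i$, then also $f=\bar h(\bar p_1(x_1)+\cdots+\bar p_d(x_d))$, and the same uniqueness yields an affine map conjugating the two representations; choosing the reparametrization compatibly (or averaging) produces real $h,p_1,\ldots,p_d$. The multiplicative real case, and the real-to-complex passage in the base case, are as in~\cite{RSS1,RSdZ2}.
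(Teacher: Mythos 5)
Your approach is a genuinely different decomposition from the paper's. Where the paper fixes $d-3$ coordinates generically and works with the parametrized trivariate dichotomy (Lemma~\ref{analytic3vars}) to directly produce the clean differential equation \eqref{de}, you induct on $d$ by fixing a single coordinate and invoking the $(d-1)$-variable theorem. Both reduce the problem to: ``if all relevant restrictions are special, then $f$ is special.'' In the paper this is Lemma~\ref{3tod} plus Lemma~\ref{speform}, and the crucial advantage there is that property (ii') of Lemma~\ref{analytic3vars} hands you a \emph{polynomial identity in all $d$ variables}, not merely specialness of each slice; that extra rigidity is what makes the ``lifting'' step tractable. Your structure lemma would have to be extracted from the coarser hypothesis ``$f^{(i)}_a$ is special for infinitely many $a$,'' together with uniqueness-of-representation facts which you assert but do not prove. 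This is the real content of the theorem and you correctly flag it as the main obstacle, but as written it remains a gap roughly equivalent in weight to the paper's Sections~4--5.

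There is also a concrete error in the step where you try to show all the types $\tau_i$ agree. You claim additive and multiplicative specialness are detected by ``complementary'' conditions --- separability of $\partial_j f/\partial_k f$ for the additive form, and of the same expression ``after replacing $\partial f$ by $\partial f/f$'' for the multiplicative one. Neither half of this is right. For $f=h(p_1\cdots p_d)$ with a nontrivial $h$, the ratio $\partial_j f/f$ contains the factor $h'(\prod p_l)/h(\prod p_l)$ and so is \emph{not} a univariate function of $x_j$; meanwhile $(\partial_j f/f)/(\partial_k f/f)=\partial_j f/\partial_k f$, so the two ``conditions'' you write down are identical, not complementary. In fact both special forms are detected by the \emph{same} condition, namely that $\partial_j f/\partial_k f$ separates as $r_j(x_j)/r_k(x_k)$ for univariate rational $r_j$; what distinguishes them is the pole structure of the $r_j$'s (polynomials for the additive form, log-derivatives with simple poles for the multiplicative one), which is precisely what the case split on $\Gamma$ inside the paper's proof of Lemma~\ref{speform} exploits. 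So the attempt to determine $\tau_i$ in advance is both incorrect and unnecessary: the cleaner route, taken in the paper, is to derive the single differential equation \eqref{de} and only afterwards decide which form one lands in. Finally, in the real case, ``choosing the reparametrization compatibly (or averaging)'' is not obviously sound --- averaging two valid representations need not give a valid one --- whereas the paper's argument pins down the constant $c$ with $r_j=c\,\tilde r_j$ via a residue computation; if you want to go through uniqueness instead, you would need to state and prove that uniqueness carefully.
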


\vspace{.5cm}
\noindent {\it Remark.} A related problem, studied in \cite{ESz, RSdZ1}, deals with algebraic varieties of the form $F(x,y,z)=0$ or $F(x,y,z,w)=0$ in $\C^4$ (not restricted to be graphs of polynomial functions $z=f(x,y)$ or $w=f(x,y,z)$ as in our setup). A recent paper by Bays and Breuillard~\cite{BB18} studies varieties $V$ in $\C^k$ that {\it admits no power-saving}:  Roughly, this means that for every $n$, there exist $A_1,\ldots,A_d\subset \C$, each of size $n$, such that $|V\cap (A_1\times \cdots\times A_d)|=\Theta^*(n^{\dim V})$, where the $*$ in the $O$-notation stands for any sub-polynomial factor (see \cite{BB18} for the precise definition).
Note that in case the alternative (ii) in Theorem~\ref{main} holds for a given $d$-variate polynomial $f$, then the $d$-dimensional variety $V:=\{y=f(x_1,\ldots,x_d)\}\subset \C^{d+1}$ admits no power-saving.
Thus, the result in \cite{BB18} provides a description of such $V$. However, this description is somewhat weaker and less concrete than ours, as it applies to a more general setup.

\paragraph{Sketch of Proof.}
In our analysis we show that the following phenomenon occurs.
Let  $f\in \C[x_1,\ldots,x_k]$. For every triple $\{i_1,i_2,i_3\}\subset \{1,\ldots,k\}$, consider the trivariate polynomial induced by $f$ by setting certain generic values to $\{x_1,\ldots, x_k\}\setminus\{i_1,i_2,i_3\}$. 
Suppose that each of these induced trivariate polynomials is special (in the sense of \cite{RSdZ2} and as stated in the introduction). In this case we show that $f$ is special as a $d$-variate polynomial; i.e., property (ii) in Theorem~\ref{main} holds for $f$. 
On the other hand, if one of these induced trivariate polynomials turns out not to be special, then by applying (essentially) the result for trivariate polynomials, we show that property (i) in Theorem~\ref{main} holds for $f$.

In more detail, in Lemma~\ref{analytic3vars} we extend the results from \cite{RSdZ2}, to handle the case of trivariate polynomials, with coefficients  that are themselves polynomials of $d-3$ variables. We prove that substituting generic values to the coefficients results in either a trivariate polynomial which is not special, in which case the expansion property (Lemma~\ref{analytic3vars}(i')) holds, or certain differential equations hold (Lemma~\ref{analytic3vars}(ii')).
In the latter case, the differential equations imply that the relevant trivariate polynomial is special, but we do not elaborate this, since we first want to obtain a system of differential equations that is more symmetric in the $d$-variables of $f$, from which we will be able to deduce one of the forms in Theorem~\ref{main}(ii).

In case restricting $f$ to each triple of variables induces (generically) a trivariate polynomial that is special, the differential equations we get from  
Lemma~\ref{analytic3vars}(ii') will allow us to deduce a more symmetric system of differential equations; this is obtained in Lemma~\ref{3tod}.
It is not hard to show (for instance, by generalizing the analysis of \cite{RSdZ2}), that this system of equations implies a ``local'' version of Theorem~\ref{main}(ii), where $x_1,\ldots,x_d$ are taken from an open neighborhood of $\C^d$ and $p_1,\ldots,p_d,h$ are univariate analytic functions, defined on an open subset of $\C$.
To show the stronger statement, that $p_1,\ldots,p_d,h$ can be taken to be univariate polynomials, we apply the analysis of Tao~\cite{Tao}. In \cite{Tao} the bivariate case is considered, and we generalize Tao's analysis to $d$ variables in Lemma~\ref{speform}. 
For completeness we provide full details of this step, even if it in parts overlaps the proof of Tao. In addition, we provide the background needed from complex analysis in the Appendix. 

\vspace{.5cm}
\noindent {\it Remark.} Consider the example $f(x,y,z)=xy+z$. Setting value to any of $x,y,z$ results in a bivariate polynomial, which has one of the special forms, in the sense of \cite{ER00,RSS1}. Nevertheless, $f$ is not of one of the special forms as a trivariate polynomial. This example shows that, in order to determine whether a trivariate polynomial is special, it is not sufficient to consider the bivariate polynomials induced by $f$ by fixing one of the variables. This means that the approach taken in this paper is applicable only to the case of $d\ge 4$ variables.

\section{Proof of main result}
\subsection{Three main lemmas}
\label{sec:ext}
We split the proof of our main result, Theorem~\ref{main}, into three lemmas.

In Lemma~\ref{analytic3vars}, we extend the analysis of Raz, Sharir, and De Zeeuw~\cite{RSdZ2} from trivariate to $d$-variate polynomial functions, regarding $d-3$ of the variables as fixed parameters. We prove the following dichotomy that generalizes the one from~\cite{RSdZ2}; the proof of the lemma is given in Section~\ref{sec:lemRSdZ}.

\begin{lem}\label{analytic3vars}
Let $f\in\C[x,y,z,u_1,\ldots,u_l]$, for some $l\ge 1$.
Then one of the following holds.\\
(i')  There exists 
a constant-degree algebraic subvariety $\mathcal U\subsetneq\C^l$
such that, for every $\u\in\C^l\setminus\mathcal U$ and every finite sets $A,B,C\subset \C$, we have
$$|f(A\times B\times C\times \{\u\})|=\Omega\left(\min\left\{(|A||B||C|)^{1/2},|A||B|\right\}\right).
$$
(ii') We have the identity
$$
\frac{\tfrac{\partial f}{\partial x}(x,y,z,\u)}{p(x,\u)}
=\frac{\tfrac{\partial f}{\partial y}(x,y,z,\u)}{q(y,\u)}
=\frac{\tfrac{\partial f}{\partial z}(x,y,z,\u)}{r(z,\u)},
$$
for some rational functions $p,q,r$, each of $l+1$ variables.
\end{lem}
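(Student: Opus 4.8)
The plan is to reduce this to the trivariate result of \cite{RSdZ2} by specializing the parameters $\u$, and to control the "bad" parameter values by an algebraic condition. First I would recall the precise form of the Elekes--R\'onyai-type dichotomy for trivariate polynomials from \cite{RSdZ2}: for a fixed trivariate $g\in\C[x,y,z]$, either $|g(A\times B\times C)|=\Omega\bigl(\min\{(|A||B||C|)^{1/2},|A||B|\}\bigr)$ for all finite $A,B,C$, or $g$ satisfies the differential identity $\frac{g_x}{P(x)}=\frac{g_y}{Q(y)}=\frac{g_z}{R(z)}$ for univariate rational $P,Q,R$ (this is the "special" case). The idea is that the alternative for $g$ is governed by the vanishing of certain polynomials in the coefficients of $g$: writing out the condition that the relevant resultants/Wronskian-type expressions used in the proof of \cite{RSdZ2} vanish identically gives a finite list of polynomial equations in $\mathrm{coeff}(g)$. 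Applying this with $g=f(\cdot,\cdot,\cdot,\u)$, whose coefficients are themselves polynomials in $\u$, these become polynomial conditions on $\u$, and hence cut out a subvariety $\mathcal U\subseteq\C^l$ of degree bounded in terms of $\deg f$.

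Next I would argue the dichotomy. Suppose first that $\mathcal U\neq\C^l$, i.e., the bad locus is a proper subvariety. Then for every $\u\in\C^l\setminus\mathcal U$ the specialized polynomial $f(\cdot,\cdot,\cdot,\u)$ falls into the expanding case of \cite{RSdZ2}, and since its degree is at most $\deg f$ the constant of proportionality is uniform in $\u$; this is exactly alternative (i'). Suppose instead that $\mathcal U=\C^l$, i.e., \emph{every} specialization is special. Then for each $\u$ there are univariate rational $P_\u,Q_\u,R_\u$ with $\frac{f_x}{P_\u(x)}=\frac{f_y}{Q_\u(y)}=\frac{f_z}{R_\u(z)}$. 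The task is to upgrade this $\u$-by-$\u$ statement to a single identity with $p,q,r$ rational in all $l+1$ variables. I would do this by making the choice of $P_\u,Q_\u,R_\u$ canonical: e.g.\ normalize $P_\u$ to be the primitive part (in $x$) of a suitable $\gcd$/factor of $f_x(x,y,z,\u)$ obtained by eliminating $y,z$, which depends algebraically --- hence rationally after clearing denominators --- on $\u$; equivalently, evaluate the common ratio $f_x/P_\u$ at generic fixed $y=y_0,z=z_0$ and read off $p,q,r$ from $f_x,f_y,f_z$ divided by these evaluations. One should check the resulting $p(x,\u),q(y,\u),r(z,\u)$ are well-defined rational functions and that the three-fold identity, which holds for every $\u$ outside a lower-dimensional set, holds as an identity of rational functions in $\C(x,y,z,u_1,\dots,u_l)$.

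I expect the main obstacle to be the \emph{uniformity/rationality} step in the special case: extracting $P_\u,Q_\u,R_\u$ that depend rationally (not just pointwise) on $\u$, since a priori the factorization of $f_x$ witnessing specialness could jump in form across different strata of parameter space (degree drops, spurious cancellations, non-reduced behaviour). Handling this cleanly requires either a Noetherian/constructibility argument stratifying $\C^l$ into finitely many locally closed pieces on each of which the witnesses vary algebraically, and then showing the finitely many rational functions so obtained must agree (because they agree on a dense set and the identity is polynomial), or a more hands-on elimination-theoretic construction of $p,q,r$ directly from $f$. A secondary, more routine obstacle is verifying that the "bad coefficient locus" coming out of the proof in \cite{RSdZ2} is genuinely \emph{algebraic} of bounded degree --- this amounts to inspecting that proof and noting that every case distinction there is triggered by the vanishing of an explicit polynomial (resultant, discriminant, or Wronskian determinant) in the coefficients, which I would state as the precise input borrowed from \cite{RSdZ2}.
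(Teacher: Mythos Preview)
Your approach is essentially the same as the paper's, and it works. The paper makes the construction concrete in a way that dissolves the obstacle you flag: it applies \cite{RSdZ2} not as a black box but via the explicit Wronskian $G_\u(x,x',s,t)=f_s(x,s,t,\u)f_t(x',s,t,\u)-f_s(x',s,t,\u)f_t(x,s,t,\u)$, so that the ``bad locus'' $\mathcal U$ is exactly $\{\u:G_\u\equiv 0\text{ in }x,x',s,t\}$, visibly a constant-degree subvariety. The key point you should note is that when $\mathcal U=\C^l$, the identity $G\equiv 0$ holds as a polynomial identity in \emph{all} $l+4$ variables simultaneously, not just $\u$-by-$\u$; hence the ratio $f_y/f_z$ is a rational function of $(x,y,z,\u)$ that is independent of $x$ as a rational function, with no need for any stratification or constructibility argument. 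Repeating for two more permutations of $x,y,z$ and then specialising at generic $y_0,x_0$ (exactly your second suggestion) yields $p,q,r$ directly. So your anticipated ``main obstacle'' does not arise once you work identically in all variables rather than pointwise in $\u$.
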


In Lemma~\ref{3tod} we show how to use the previous lemma in order to deduce  a stronger differential equation that involves all the variables. This lemma is the main technical part, where we go from trivariate to $d$-variate polynomial functions.
The proof of the lemma is given in Section~\ref{sec:3tod}.

\begin{lem}\label{3tod}
Let $f\in \C[x_1,\ldots,x_d]$ be a real polynomial of $n$ variables.
Assume that for every permutation $\sigma$ of $\{1,\ldots,d\}$, we have
$$
\frac
{\tfrac{\partial f}{\partial x_{\sigma(1)}}(x_1,\ldots,x_d)}
{r_{\sigma,1}(x_{\sigma(1)},x_{\sigma(4)},\ldots,x_{\sigma(d)})}
=
\frac
{\tfrac{\partial f}{\partial x_{\sigma(2)}}(x_1,\ldots,x_d)}
{r_{\sigma,2}(x_{\sigma(2)},x_{\sigma(4)},\ldots,x_{\sigma(d)})}
=
\frac
{\tfrac{\partial f}{\partial x_{\sigma(3)}}(x_1,\ldots,x_d)}
{r_{\sigma,3}(x_{\sigma(3)},x_{\sigma(4)},\ldots,x_{\sigma(d)})},
$$
for some $(d-2)$-variate rational functions $r_{\sigma,i}$.
Then there exist univariate rational functions $r_1,\ldots,r_d$, such that 
$$
\frac
{\tfrac{\partial f}{\partial x_1}(x_1,\ldots,x_d)}
{r_1(x_1)}
=\cdots=
\frac
{\tfrac{\partial f}{\partial x_d}(x_1,\ldots,x_d)}
{r_d(x_d)}
$$
\end{lem}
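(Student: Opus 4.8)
Write $g_i:=\partial f/\partial x_i$. If some $g_i\equiv 0$ then, since each $r_{\sigma,i}$ is a nonzero rational function, the hypothesis forces all $g_j\equiv 0$ and the conclusion holds trivially (take $r_j\equiv 1$); so assume each $g_i\not\equiv 0$. Also, if $d=3$ then applying the hypothesis to $\sigma=\mathrm{id}$ is already exactly the desired conclusion, so assume $d\ge 4$. The heart of the matter is to show that for every pair $i\ne j$ the rational function $g_i/g_j$ factors as a product of a univariate rational function of $x_i$ and one of $x_j$, and moreover that the $x_i$-factor can be chosen independent of $j$; the functions $r_i$ are then read off immediately.

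\textbf{Step 1: reduce each $g_i/g_j$ to two variables.} Fix $i\ne j$ and any $m\in\{1,\dots,d\}\setminus\{i,j\}$, and choose a permutation $\sigma$ with $\sigma(1)=i$, $\sigma(2)=j$, $\sigma(3)=m$. The hypothesis gives $g_i/g_j=r_{\sigma,1}/r_{\sigma,2}$, and by construction neither $r_{\sigma,1}$ nor $r_{\sigma,2}$ involves $x_m$ (they involve $x_i$, resp.\ $x_j$, together with $\{x_s: s\notin\{i,j,m\}\}$). Hence $\partial(g_i/g_j)/\partial x_m=0$ as an identity of rational functions. Here the hypothesis $d\ge 4$ is used: letting $m$ run over all of $\{1,\dots,d\}\setminus\{i,j\}$ (each index in turn playing the role of the ``third'' variable), we conclude that $g_i/g_j$ depends only on $x_i$ and $x_j$; write $g_i/g_j=h_{ij}(x_i,x_j)$.

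\textbf{Step 2: a factorization fact, and finishing.} I will use the elementary observation that if $P\in\C(x,y)\setminus\{0\}$ and $Q\in\C(y,z)\setminus\{0\}$ and the product $PQ$ does not depend on $y$, then $P(x,y)=a(x)b(y)$ for some $a\in\C(x)$, $b\in\C(y)$: indeed $\partial_y(PQ)=0$ gives $\partial_yP/P=-\partial_yQ/Q$, whose left side lies in $\C(x,y)$ and right side in $\C(y,z)$, so the common value lies in $\C(y)$; then $\partial_y\bigl(P(x,y)/P(x_0,y)\bigr)=0$ for generic $x_0$, so $P(x,y)/P(x_0,y)$ is independent of $y$, giving the product form with $b(y):=P(x_0,y)$. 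Now apply this to the triple of variables $(x_1,x_2,x_j)$ for each $j\ge 3$: from $h_{1j}(x_1,x_j)=h_{12}(x_1,x_2)\,h_{2j}(x_2,x_j)$, the product $h_{12}\,h_{2j}$ is independent of $x_2$, so $h_{12}(x_1,x_2)=a(x_1)\,b(x_2)$ with $a,b$ univariate and, crucially, \emph{not depending on $j$}. Then $h_{1j}(x_1,x_j)=a(x_1)\cdot\bigl(b(x_2)h_{2j}(x_2,x_j)\bigr)$, and since $h_{1j}$ is independent of $x_2$ the bracketed factor is a univariate function $\beta_j(x_j)$ (for $j=2$ this is just $h_{12}=a(x_1)b(x_2)$, so set $\beta_2:=b$). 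Thus $g_1/g_j=a(x_1)\beta_j(x_j)$ for every $j\in\{2,\dots,d\}$, i.e.\ $g_1/a(x_1)=g_j\beta_j(x_j)$. Setting $r_1:=a$ and $r_j:=1/\beta_j$ for $j\ge 2$ (all nonzero rational functions) yields $g_1/r_1=g_2/r_2=\cdots=g_d/r_d$, as required.

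\textbf{Anticipated obstacle.} The only genuine subtlety is Step 1: the hypothesis does not literally present $g_i/g_j$ as a function of two variables, and one must exploit the freedom in choosing the ``third'' index together with $d\ge 4$ to eliminate dependence on every remaining variable. Once $g_i/g_j=h_{ij}(x_i,x_j)$ is in hand, Step 2 is routine logarithmic-derivative bookkeeping; care is only needed to keep all manipulations at the level of rational functions (legitimate since no $g_i$ vanishes identically).
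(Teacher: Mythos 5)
Your proof is correct. It pursues the same core strategy as the paper — exploit the freedom in choosing the ``third'' variable of the permutation to show that each pairwise ratio $g_i/g_j$ depends on few variables, then factor — but the execution is streamlined in two ways that are worth noting. First, in Step~1 you kill \emph{all} extraneous dependence of $g_i/g_j$ at once (cycling every $m\notin\{i,j\}$ through the third slot), reducing directly to a two-variable rational function $h_{ij}(x_i,x_j)$; the paper instead keeps the bundle $\u=(x_{\sigma(5)},\dots,x_{\sigma(d)})$ as a parameter, peels off one variable $w$ at a time, and needs a ``minimal common factor $h_1(w,\u)$'' argument together with the normalization that $r_{\sigma,1},r_{\sigma,2},r_{\sigma,3}$ share no irreducible component to conclude $h_1$ is constant. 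Your reduction to two variables makes the factorization lemma a clean statement about $\C(x,y)\cap\C(y,z)=\C(y)$ and avoids that coprimality bookkeeping entirely. Second, by anchoring everything on $g_1/g_j=a(x_1)\beta_j(x_j)$ you avoid the paper's final step of reconciling several separately-derived triple equalities by matching scalar constants $c_j$. Both proofs rest on the same factorization fact (if a product/quotient of rational functions in overlapping variable sets is independent of the common variable, each factor splits), so the mathematical content is the same; yours is simply better organized. One tiny inaccuracy: you write that ``$d\ge 4$ is used'' in Step~1, but in fact Step~1 goes through verbatim for $d=3$ (there is just a single choice of $m$, and $r_{\sigma,1},r_{\sigma,2}$ are then univariate, making the conclusion immediate) — you handled $d=3$ separately anyway, so this is a harmless remark, not a gap.
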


Finally, in Lemma~\ref{speform}, we prove that the system of differential equations \eqref{de} (deduced in Lemma~\ref{3tod}) implies that $f$ has one of the forms specified in Theorem~\ref{main} property (ii).
The proof is an extension of the analysis of Tao~\cite{Tao} for polynomials of two variables; the details are given in Section~\ref{sec:speform}.
\begin{lem}\label{speform}
Let $f\in \C[x_1,\ldots,x_d]$ be a complex polynomial of $d$ variables.
Assume that 
\begin{equation}\label{de}
\frac
{\tfrac{\partial f}{\partial x_1}(x_1,\ldots,x_d)}
{r_1(x_1)}
=\cdots=
\frac
{\tfrac{\partial f}{\partial x_d}(x_1,\ldots,x_d)}
{r_d(x_d)}
\end{equation}
for some univariate rational functions $r_1,\ldots,r_d$.
Then $f$ is of one of the forms
\begin{align*}
f(x_1,\ldots, x_d)&=h(p_1(x_1)+\cdots+p_d(x_d))~~\text{or}\\
f(x_1,\ldots, x_d)&=h(p_1(x_1)\cdot\ldots\cdot p_d(x_d)),
\end{align*}
for some univariate polynomials $h(x)$, $p_1(x),\ldots,p_d(x)$.
Moreover, if $f$ is a polynomial with real coefficients,
then $h,p_1,\ldots,p_d$ in the conclusion can be taken to be {\em real} univariate polynomials.
\end{lem}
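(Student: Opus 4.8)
The plan is to integrate the system \eqref{de} step by step, introducing the functions $p_i$ as antiderivatives of the $r_i$, and then analyze the resulting functional equation. Let me write $g$ for the common value of the ratios in \eqref{de}; this $g$ is a rational function whose numerator and denominator we can control, and a priori it could be a function of all the variables. The first step is to understand its structure: since $r_i(x_i)\,g = \partial f/\partial x_i$ is a polynomial for every $i$, and the $r_i$ are univariate, differentiating $\log$ of the ratios shows that $\partial/\partial x_j \log g$ depends only on $x_j$ for each $j$ (for $j\ne i$ we get $\partial_{x_j}(\partial_{x_i} f / r_i(x_i)) = \partial_{x_i}(\text{something in }x_j)/r_i(x_i)$, and symmetrizing forces a separation of variables). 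Hence $\log g = \sum_j \phi_j(x_j) + \text{const}$, so $g$ itself factors as $g(x_1,\dots,x_d) = c\prod_j g_j(x_j)$ for univariate rational $g_j$; absorbing the $g_i$ into the $r_i$, we may assume $g$ is a constant, i.e.\ $\partial f/\partial x_i = c_i\, r_i(x_i)\,G(x_1,\dots,x_d)$ for a common $G$. Actually it is cleaner to proceed as in Tao's bivariate argument: set $P_i$ to be an antiderivative of $1/r_i$ wherever $r_i\ne 0$, work locally on a polydisc avoiding the zeros/poles, and observe that \eqref{de} says $f$ is constant along the "diagonal" directions $\partial_{x_i} - \text{(ratio)}\,\partial_{x_j}$, which integrates to $f = \Phi(P_1(x_1)+\dots+P_d(x_d))$ for some analytic $\Phi$ of one variable — or, after taking logs if the common ratio is multiplicative rather than additive in type, $f = \Phi(P_1(x_1)\cdots P_d(x_d))$. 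The dichotomy between the two forms is dictated by whether the $1$-forms $r_i^{-1}\,dx_i$ are exact (giving polynomial $P_i$) or only have a logarithmic primitive.

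The second, and genuinely harder, step is the descent from \emph{analytic local} data to \emph{global polynomial} data: a priori $\Phi$ and the $P_i$ are only analytic on small discs, and $r_i^{-1}$ need not have a rational primitive. Here I would invoke Lemma~\ref{speform} exactly as the paper says, following Tao~\cite{Tao}. The key observations are: (a) $f$ is a genuine polynomial, so it extends to all of $\C^d$ and has controlled growth; (b) fixing generic values of $x_2,\dots,x_d$ exhibits $P_1(x_1) = \Psi(f(x_1,\dots))$ for a local inverse $\Psi$ of $\Phi$, which forces $P_1$ (hence each $P_i$) to be algebraic, and then single-valuedness plus the growth bound from (a) forces $P_i$ to be a polynomial (in the additive case) or $\exp$ of a polynomial times a power, handled by passing to $\log$ (in the multiplicative case — here one shows $p_i := P_i$ can be taken to be an honest polynomial because $f$ being a polynomial rules out branching and essential singularities). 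Once the $p_i$ are polynomials, $h := \Phi$ is determined on a disc by $h(p_1(x_1)+\dots) = f$ and, being the composition inverse relation with polynomials on both sides over a Zariski-dense set, must itself be a polynomial. The real case is an addendum: if $f$ has real coefficients, complex conjugation symmetry of the whole construction lets one replace $p_i(x)$ by $\tfrac12(p_i(x)+\overline{p_i(\bar x)})$ (additive case) or argue via the real points that the polynomials can be chosen with real coefficients — this is a routine symmetrization once the polynomial structure is in hand.

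Concretely the steps, in order, are: (1) show the common ratio $g$ in \eqref{de} separates variables, reducing to $\partial_{x_i} f = c_i r_i(x_i) G$; (2) on a generic polydisc, integrate along the "level" directions to get $f = h(p_1(x_1)+\cdots+p_d(x_d))$ or $f = h(p_1(x_1)\cdots p_d(x_d))$ with $h, p_i$ analytic, where the additive-vs-multiplicative split comes from the type of singularity of $r_i^{-1}$; (3) use that $f$ is a polynomial to show each $p_i$ is algebraic, single-valued, and of polynomial growth, hence a polynomial; (4) deduce $h$ is a polynomial; (5) symmetrize under conjugation for the real statement. The main obstacle is step (3): controlling the monodromy and growth of the locally-defined inverse functions to rule out logarithmic/algebraic branching and essential singularities — this is precisely where Tao's complex-analytic argument (and the Appendix material on complex analysis referenced in the sketch) does the work, and the $d$-variable version requires running that argument with the other $d-1$ variables frozen at generic values while keeping track of uniformity.
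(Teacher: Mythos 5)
Your overall scaffolding (get a local analytic identity $f=\Phi(\text{sum or product of the }P_i)$, then use the fact that $f$ is a polynomial to upgrade $\Phi$ and the $P_i$ to polynomials) does match the shape of the paper's argument, which proceeds via Theorem~\ref{additive} and then the analysis of the entire function $H$. But there are three concrete gaps. First, the integrand is backwards: if $f=h(p_1+\cdots+p_d)$ then $\partial_{x_i}f=h'(\textstyle\sum p_j)\,p_i'(x_i)$, and for $\partial_{x_i}f/r_i$ to be independent of $i$ you need $r_i=p_i'$, so $P_i$ must be a primitive of $r_i$, not of $1/r_i$. This is not cosmetic: the additive-vs-multiplicative dichotomy is governed by whether $r_i$ has simple poles with nonzero residue (so that $\int_\gamma r_i$ has multivalued logarithmic pieces), and describing it via the singularities of $r_i^{-1}$ sends you to the zeros of $r_i$, which is an unrelated condition. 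The paper makes this precise via the lattice $\Gamma=\Gamma_1+\cdots+\Gamma_d$ of periods generated by $2\pi i\cdot(\text{residues of the }r_j)$ and shows that periodicity of $H$ under $\Gamma$, together with Liouville, forces $\Gamma$ to be discrete of rank at most one, giving exactly the two cases.

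Second, your step (3) claims that a local inverse $\Psi$ of $\Phi$ composed with $f(\cdot,\mathbf b)$ exhibits $P_1$ as algebraic, and then single-valuedness plus growth forces it to be a polynomial. But $\Psi$ is only an analytic local inverse; there is no reason for $P_1=\Psi\circ f(\cdot,\mathbf b)$ to be algebraic, so the monodromy/branching control you want does not follow from this. The paper (following Tao) instead proves directly that each rational primitive $R_j$ has no poles by showing that a pole would force $H$ (or $\tilde H=H\circ\log$) to be bounded on a neighborhood of infinity — via Rouch\'e's theorem in the $\Gamma=0$ case and via Picard's theorem for the essential singularity of $\exp\circ R_j$ in the $\Gamma=2\pi i\Z$ case — and then applies Liouville; polynomiality of $H$ and $\tilde H$ then comes from the generalized Liouville theorem applied to a polynomial-growth bound. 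Third, the real-case ``symmetrization'' $p_i\mapsto\tfrac12(p_i(x)+\overline{p_i(\bar x)})$ doesn't preserve the identity $f=h(p_1+\cdots+p_d)$: take $h(t)=(t-i)^2$, $p_1(x)=x+i$, $d=1$; then $f=x^2$ is real, the symmetrized $p_1$ is $x$, but $h(x)\neq x^2$, so $h$ must also be adjusted and it is not clear how to do this coherently across several $p_i$. The paper instead picks the concrete $r_j=p_j'$ (additive) or $r_j=p_j'/p_j$ (multiplicative), observes via the argument principle that the residues are positive integers, deduces from conjugation that the scalar relating $r_j$ to a real rational function is itself real, and concludes that the $\tilde R_j$ and $\tilde H$ can be taken real. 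So the plan is right in outline but each of the three steps as written would need to be replaced by the paper's more careful arguments.
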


\subsection{Proof of Theorem~\ref{main}} 
We now prove Theorem~\ref{main}, given Lemmas~\ref{analytic3vars}, \ref{3tod} and \ref{speform}.
Let $f\in \C[x_1,\ldots,x_d]$ be as in the statement. 
For any permutation $\sigma$ of $\{1,\ldots,d\}$ 
we rename the coordinates $(x_{\sigma(1)},\ldots,x_{\sigma(d)})$ as $(x,y,z,u_1,\ldots,u_l)$, where $l=d-3$, and apply Lemma~\ref{analytic3vars}.

Assume first that for {\it some} permutation $\sigma$, property (i') of Lemma~\ref{analytic3vars} holds. In this case we show that property (i) of Theorem~\ref{main} holds. Indeed, assume that, for a given permutation $\sigma$ and the corresponding renaming of the variables as $(x,y,z,u_1,\ldots,u_l)$, property (i') of Lemma~\ref{analytic3vars} holds. Let $\mathcal U$ be the variety from the statement of property (i'). 
Consider any finite sets $A,B,C,U_1,\ldots,U_l\subset\C$, each of size $n$.
Since $\mathcal U\subset \C^l$ is of codimension at least 1 and of constant degree, then, for $n$ large enough, $U_1\times\cdots\times U_l\not\subseteq \mathcal U$.
Thus, there exists $\u_0\in(U_1\times\cdots\times U_l)\cap(\C^l\setminus \mathcal U)$.
By property (i'), we have 
$$|f(A\times B\times C\times \{\u_0\})|=\Omega(n^{3/2})
$$
and thus clearly
$$|f(A\times B\times C\times U_1\times\cdots\times U_l)|=\Omega(n^{3/2}).
$$
So in this case property (i) of Theorem~\ref{main} holds, and we are done.

Assume next that, for {\it every} permutation of the variables, property (ii') of Lemma~\ref{analytic3vars} holds. In this case the assumptions, and hence also the conclusion, of Lemmas~\ref{3tod} hold. Combining this with Lemma~\ref{speform} proves property (ii) of Theorem~\ref{main}. This completes the proof of the theorem. \hfill $\qed$

\section{Proof of Lemma~\ref{analytic3vars}}\label{sec:lemRSdZ}
\subsection{Review of results from \cite{RSdZ2}}\label{review}
In this section we restate results proved in \cite{RSdZ2}.
Let $F\in \C[x,y,z,w]$ be an irreducible 4-variate polynomial, and assume that $F$ depends non-trivially in each of its variables.
Following \cite[Section~3]{RSdZ2}, we define varieties $V$, $W$, and $\widetilde W$. Define the variety in $\C^6$ 
$$
V := \{(x, y, x',y',s,t)\in\C^6\mid F(x,y,s,t)=0,F(x',y',s,t)=0\}
$$
By \cite[Lemma 3.1]{RSdZ2}, $V$ is 4-dimensional.
 Let $G\in \C[x,y,x',y',s,t]$ be the polynomial given by
$$
G=\tfrac{\partial F}{\partial s}(x,y,s,t)\tfrac{\partial F}{\partial t}(x',y',s,t)-\tfrac{\partial F}{\partial s}(x',y',s,t)\tfrac{\partial F}{\partial t}(x,y,s,t).
$$
Consider the subvariety $W := V\cap Z(G)$ of $V$. 

The variety $W$ might be 4-dimensional from the following ``trivial" reason.
Define 
$$
\mathcal{T}:=\{(c,d)\in\C^2\mid F(x,y,c,d)\equiv 0~\text{(as a polynomial in x and y)}\}
$$
If $\mathcal{T}$ is nonempty, then $\C^4\times \mathcal{T}\subset W$.
By \cite[Lemma~2.1]{RSdZ2}, $\mathcal{T}$ is finite and has cardinality at most $(\deg F)^2$. 
Let $\widetilde{W}$ be any irreducible component of $W$, which is not a component of $\C^4\times \mathcal{T}$, and has maximal dimension (among these components).

The following is proven in \cite[Section 3.3]{RSdZ2}. 
\begin{thm}[{\bf Raz, Sharir, De Zeeuw}~\cite{RSdZ2}]\label{rsdzdimW}
Let $F\in \C[x,y,z,w]$ be an irreducible 4-variate polynomial, and assume that $F$ depends non-trivially in each of its variables. Let $V$, $\cal T$ and $W$ be as above. Let $\widetilde{W}$ be any irreducible component of $W$, which is not a component of $\C^4\times \mathcal{T}$ and has maximal dimension.
If $\dim \widetilde{W}\le 3$, then 
$$
|\{F=0\})\cap (A\times B\times C\times D)|=O(|A||B||C||D|+|A||B|+|A||C|+|A||D|+|B||C|+|B||D|+|C||D|),
$$
with constant of proportionality that depends on $\deg F$.
\end{thm}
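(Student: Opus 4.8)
\medskip

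The plan is to recast $|Z(F)\cap(A\times B\times C\times D)|$ as a point--curve incidence count and then apply a Szemer\'edi--Trotter--type bound for algebraic curves with two degrees of freedom. For each $(s,t)\in C\times D$ set $\gamma_{s,t}:=\{(x,y)\in\C^2\mid F(x,y,s,t)=0\}$; for every $(s,t)\notin\mathcal{T}$ this is a plane algebraic curve of degree at most $\deg F$, while for the at most $(\deg F)^2$ pairs $(s,t)\in\mathcal{T}$ one has $\gamma_{s,t}=\C^2$. Since
$$
|Z(F)\cap(A\times B\times C\times D)|=\sum_{(s,t)\in C\times D}\bigl|\gamma_{s,t}\cap(A\times B)\bigr|,
$$
it suffices, up to the $O(|A||B|)$ contribution of the $\mathcal{T}$--pairs, to bound the number of incidences between the $|A||B|$ points of $A\times B$ and the at most $|C||D|$ curves $\gamma_{s,t}$.

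First I would invoke the Pach--Sharir incidence bound for bounded-degree algebraic curves having $k=2$ degrees of freedom and bounded multiplicity; it yields a main term $\bigl(|A||B||C||D|\bigr)^{2/3}$ together with the linear terms $|A||B|$ and $|C||D|$, and over $\C$ it is available either through a complex Szemer\'edi--Trotter theorem or through a real incidence bound for points and two-dimensional varieties after identifying $\C^2$ with $\R^4$. Its hypotheses are checked as follows: bounded degree of the curves is immediate from $\deg F=O(1)$; two distinct curves meet in $O(1)$ points by B\'ezout, once one discards repeated curves --- and only $O(1)$ pairs $(s,t)$ can produce a fixed curve, since a common factor of $F(x,y,s,t)$ and $F(x,y,s',t')$ in the variables $x,y$ would contradict the irreducibility of $F$; and the remaining, crucial hypothesis is that any two points of $\C^2$ lie on $O(1)$ of the curves $\gamma_{s,t}$, which is exactly what the assumption $\dim\widetilde{W}\le 3$ provides.

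To see this, note that two points $(x,y)$ and $(x',y')$ lie on a common $\gamma_{s,t}$ precisely when $(x,y,x',y',s,t)\in V$, so the number of common curves through a fixed pair is the cardinality of the fibre of the projection $V\to\C^4$ onto the $(x,y,x',y')$--coordinates. As $\dim V=4$ by \cite[Lemma~3.1]{RSdZ2}, this fibre has size $O(1)$ for all $(x,y,x',y')$ outside a proper, bounded-degree subvariety $\Sigma\subsetneq\C^4$ --- the locus where the fibre is positive-dimensional, equivalently where the two curves $\{F(x,y,s,t)=0\}$ and $\{F(x',y',s,t)=0\}$ in the $(s,t)$--plane share a component. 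On such a shared component the Jacobian $G$ vanishes identically, so $\Sigma$ is contained in the image of $W$ after removing the trivial part $\C^4\times\mathcal{T}$; hence $\dim\widetilde{W}\le 3$ forces $\dim\Sigma\le 3$. I would then split the incidence sum: point-pairs outside $\Sigma$ obey the two-degrees-of-freedom hypothesis and contribute the main term, while the contribution of pairs lying in the hypersurface $\Sigma$ is bounded by a direct dimension count (a bounded-degree hypersurface in $\C^2\times\C^2$ meets $(A\times B)^2$ in comparatively few points, and each such pair lies on only few of the curves indexed by $C\times D$), producing only the lower-order pairwise-product terms in the statement.

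The step I expect to be the main obstacle is precisely this last translation --- verifying that $G\not\equiv 0$ on $V$, i.e.\ $\dim\widetilde{W}\le 3$, really forces the clean ``two points determine $O(1)$ curves'' property, and then making the estimate of the exceptional locus $\Sigma$ (including the bookkeeping for the degenerate $\mathcal{T}$--curves and for repeated curves) tight enough to land within the error terms asserted.
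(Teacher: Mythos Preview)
The paper does not supply its own proof of this theorem; it simply quotes it from \cite[Section~3.3]{RSdZ2}. Your outline is correct and is precisely the strategy carried out there: rewrite $|Z(F)\cap(A\times B\times C\times D)|$ as incidences between the point set $A\times B\subset\C^2$ and the curves $\gamma_{s,t}=\{F(\cdot,\cdot,s,t)=0\}$ indexed by $C\times D$, verify that the hypothesis $\dim\widetilde W\le 3$ yields the two-degrees-of-freedom property for this curve family (with a lower-dimensional exceptional locus handled separately), and then invoke a Pach--Sharir/Szemer\'edi--Trotter incidence bound, implemented over $\C$ via the identification $\C^2\cong\R^4$. One remark: the main term as printed in the paper, $|A||B||C||D|$, is a typo --- the actual bound in \cite{RSdZ2} has main term $(|A||B||C||D|)^{2/3}$, which is what you wrote and what is needed in the application in Section~\ref{sec:lemRSdZ} to deduce $|D|=\Omega(n^{3/2})$.
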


\subsection{Proof of Lemma~\ref{analytic3vars}}
For every $\u=(u_1,\ldots,u_l)\in \C^l$, let 
$$F_{\u}(x,y,s,t):=y-f(x,s,t,\u).$$
Note that $F_{\u}$ is irreducible for every $\u\in \C^l$ fixed.
We want to apply Theorem~\ref{rsdzdimW} to $F_\u$. 
For this we consider the varieties $V$, $\cal T$, and $W$, introduced in the previous Subsection~\ref{review}, that correspond to our function $F_\u$. Put
$$
V_\u:=\{(x,y,x',y',s,t)\in \C^6\mid F_\u(x,y,s,t)=0,~F_\u(x',y',s,t)=0\}.
$$
Note that
\begin{align*}
V_\u
&=\{(x,y,x',y',s,t)\mid y=f(x,s,t,\u), y'=f(x',s,t,\u)\}\\
&=\{(x, f(x,s,t,\u), x', f(x',s,t,\u),s,t)\mid x,x',s,t\in\C\},
\end{align*}
which is clearly 4-dimensional and irreducible.
Next, put  
$$
\mathcal{T}:=\{(c,d)\in\C^2\mid y\equiv f(x,c,d,\u)~\text{(as polynomials in $x$ and $y$)}\}.
$$
Note that we have $\cal T=\emptyset$.
Finally,  put 
\begin{align*}
G_\u
&=\tfrac{\partial F_\u}{\partial s}(x,y,s,t)\tfrac{\partial F_\u}{\partial t}(x',y',s,t)-\tfrac{\partial F_\u}{\partial s}(x',y',s,t)\tfrac{\partial F_\u}{\partial t}(x,y,s,t)\\
&=\tfrac{\partial f}{\partial s}(x,s,t,\u)\tfrac{\partial f}{\partial t}(x',s,t,\u)-\tfrac{\partial f}{\partial s}(x',s,t,\u)\tfrac{\partial f}{\partial t}(x,s,t,\u)
\end{align*}
and define $W_\u:=V_\u\cap \{G_\u=0\}$.

Since $V_\u$ is irreducible and $\cal T=\emptyset$, 
we can apply Theorem~\ref{rsdzdimW} with $\widetilde{W_\u}=W_\u$.
Indeed, if $\dim W_\u=4$, then in fact $W_\u=V_\u$ and $W_\u$ has a unique irreducible component.
Otherwise, in case $\dim W_\u\le 3$, then every irreducible component of $W_\u$ is of dimension at most $3$, and hence $\dim \widetilde W_\u\le 3$ for any proper choice of $\widetilde W_u$.

Observe in addition that $\dim W_\u=4$ if and only if $G_\u\equiv 0$ for every $x,x',s,t$ (note that indeed $G_\u$ is independent of $y$ and $y'$ in our case and that here we regard $\u$ as fixed).
Define
$$
{\cal U}:=\{\u\in\C^l\mid G_\u\equiv 0~\text{(as a polynomial in $x,x',s,t$)}\}.
$$
Assume first that ${\cal U}\neq \C^l$ and let $\u\in\C^l\setminus \cal U$. 
Apply Theorem~\ref{rsdzdimW} to the function $F_\u$,
with $A,B,C\subset \C$ arbitrary finite sets and with $D:=F_\u(A\times B\times C)$.
By our choice of the set $D$, we have 
\begin{equation}\label{n3lower}
|\{F_\u=0\}\cap (A\times B\times C\times D)|= |A||B||C|.
\end{equation}
On the other hand, since $\u\not\in\cal U$, we have $\dim W_\u\le 3$, and thus the inequality in Theorem~\ref{rsdzdimW} holds for $F_\u$ and the sets $A$, $B$, $C$, and $D$. Combining this with \eqref{n3lower}, we get
$$|D|=|f(A\times B\times C\times \{\u\})|=\Omega\left(\min\left\{(|A||B||C|)^{1/2},|A||B|\right\}\right).
$$
In other words, the inequality in property (i') of Lemma~\ref{analytic3vars} holds for every $\u\not\in\cal U$.

Put
$$
G(x,x',s,t,\u):=G_\u(x,x',s,t).
$$ 
So $G$ is a complex polynomial of $l+4$ variables. Note that the degree of $G$ is bounded by some function of $\deg f$, and thus can be regarded as constant.
Write
$$
G_\u(x,x',s,t)=\sum_{0\le i+j+k+\ell\le\deg G}\alpha_{ijk\ell}(\u)x^i(x')^js^kt^\ell,
$$
for some constant-degree polynomials $\alpha_{ijk\ell}$ in the variables $u_1,\ldots,u_l$.
Note that, for $\u\in\C^l$ fixed, $G_\u(x,x',s,t)\equiv 0$ (as a polynomial in $x,x',s,t$) if and only if $\alpha_{ijk\ell}(\u)=0$ for every $0\le i+j+k+\ell\le\deg G$.
That is, we have
$$
{\cal U}=\{\u\in\C^l\mid \alpha_{ijk\ell}(\u)=0~\text{for every}~ 0\le i+j+k+\ell\le\deg G\}.
$$
So $\cal U$ is a non-trivial constant-degree algebraic subvariety in $\C^l$, unless $\alpha_{ijk\ell}(\u)\equiv0$ (as a polynomial in $u_1,\ldots,u_l$) for every $0\le i+j+k+\ell\le\deg G$.
In the latter case, we have $G(x,x',s,t,\u)\equiv 0$ (as a polynomial in $l+4$ variables).

We conclude that either 
\begin{equation}\label{G=0}
\tfrac{\partial f}{\partial s}(x,s,t,\u)\tfrac{\partial f}{\partial t}(x',s,t,\u)\equiv\tfrac{\partial f}{\partial s}(x',s,t,\u)\tfrac{\partial f}{\partial t}(x,s,t,\u),
\end{equation}
for every $x,x',s,t$, and every $\u=(u_1,\ldots,u_l)$,
or $\cal U$ is a constant-degree variety of codimension at least one, and then property (i') of Lemma~\ref{analytic3vars} holds.

We repeat the analysis for 
$$
F_\u':=y-f(s,x,t,\u)
$$
and
$$
F_\u'':=y-f(s,t,x,\u)
$$
(permuting the roles of $x$, $s$, and $t$).
In each case, we either conclude that property (i') of Lemma~\ref{analytic3vars} holds, or get a certain polynomial identity which is the analogue of \eqref{G=0}.

We summarize what we have shown so far in the following lemma.
\begin{lem}\label{summarize}
Either property (i') in Lemma~\ref{analytic3vars} holds, or
\begin{align*}
\tfrac{\partial f}{\partial y}(x,y,z,\u)\tfrac{\partial f}{\partial z}(x',y,z,\u)&\equiv\tfrac{\partial f}{\partial y}(x',y,z,\u)\tfrac{\partial f}{\partial z}(x,y,z,\u)\\
\tfrac{\partial f}{\partial x}(x,y,z,\u)\tfrac{\partial f}{\partial z}(x,y',z,\u)&\equiv\tfrac{\partial f}{\partial x}(x,y',z,\u)\tfrac{\partial f}{\partial z}(x,y,z,\u)\\
\tfrac{\partial f}{\partial x}(x,y,z,\u)\tfrac{\partial f}{\partial y}(x,y,z',\u)&\equiv\tfrac{\partial f}{\partial x}(x,y,z',\u)\tfrac{\partial f}{\partial y}(x,y,z,\u).\qed
\end{align*}
\end{lem}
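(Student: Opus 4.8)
The plan is to observe that the first of the three displayed identities is exactly the identity \eqref{G=0}, rewritten after renaming the two ``parameter'' variables $s,t$ of $F_\u$ back to $y,z$, and that it has already been established in the analysis above; the remaining two identities are obtained by repeating that analysis verbatim, with the roles of the first three arguments of $f$ permuted, as was announced right after \eqref{G=0}.

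In detail, we would apply Theorem~\ref{rsdzdimW} also to $F_\u'(x,y,s,t)=y-f(s,x,t,\u)$ and to $F_\u''(x,y,s,t)=y-f(s,t,x,\u)$. Each of these has the form ``$y$ minus a polynomial not involving $y$'', so each is linear, hence irreducible, in $y$, and is therefore irreducible for every fixed $\u$; the associated set $\mathcal T$ is again empty (the $4$-variate polynomial genuinely involves $y$, which $f$ does not), and the associated variety, the analogue of $V_\u$, is, by the same computation as for $V_\u$, $4$-dimensional and irreducible. Thus Theorem~\ref{rsdzdimW} applies with $\widetilde W=W$, and the analogue of $W_\u$ has dimension $4$ precisely when the polynomial $G_\u'$ (respectively $G_\u''$), defined analogously to $G_\u$, vanishes identically in $x,x',s,t$. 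Writing $\tfrac{\partial f}{\partial x}$, $\tfrac{\partial f}{\partial y}$, $\tfrac{\partial f}{\partial z}$ for the partials of $f$ in its first, second and third arguments, a short computation gives
\[
G_\u'=\tfrac{\partial f}{\partial x}(s,x,t,\u)\tfrac{\partial f}{\partial z}(s,x',t,\u)-\tfrac{\partial f}{\partial x}(s,x',t,\u)\tfrac{\partial f}{\partial z}(s,x,t,\u)
\]
and
\[
G_\u''=\tfrac{\partial f}{\partial x}(s,t,x,\u)\tfrac{\partial f}{\partial y}(s,t,x',\u)-\tfrac{\partial f}{\partial x}(s,t,x',\u)\tfrac{\partial f}{\partial y}(s,t,x,\u).
\]

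Expanding $G_\u'$ and $G_\u''$ in monomials in $x,x',s,t$ with coefficients polynomial in $\u$, exactly as was done for $G_\u$, we obtain constant-degree subvarieties $\mathcal U',\mathcal U''\subseteq\C^l$ with the same dichotomy as before: either the subvariety is a proper subvariety of $\C^l$, in which case property (i') of Lemma~\ref{analytic3vars} holds for every $\u$ outside it --- one uses here that the expansion bound for $|f(A\times B\times C\times\{\u\})|$ is the same function of $|A|,|B|,|C|$ no matter which three arguments of $f$ are singled out, since $A,B,C$ play symmetric roles in $A\times B\times C$ --- or the subvariety is all of $\C^l$, in which case $G_\u'$ (respectively $G_\u''$) vanishes identically in $x,x',s,t$ for every $\u$. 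Renaming $(s,x,x',t)\mapsto(x,y,y',z)$ in the identity $G_\u'\equiv0$, and $(s,t,x,x')\mapsto(x,y,z,z')$ in $G_\u''\equiv0$, yields exactly the second and third displayed identities of the lemma. Hence, if property (i') fails, it fails for all three permutations, and then all three identities hold simultaneously.

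We do not expect a real obstacle here: the substantive work has already been spent in the single application of Theorem~\ref{rsdzdimW} to $F_\u$, and what remains is essentially bookkeeping. The two points requiring some care are verifying that permuting the first three arguments of $f$ preserves the hypotheses needed for Theorem~\ref{rsdzdimW} (irreducibility of the $4$-variate polynomial, emptiness of $\mathcal T$, and constancy of all the degrees that enter), and tracking the variable renamings carefully, so that the three resulting polynomial identities come out in the symmetric form stated in the lemma.
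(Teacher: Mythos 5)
Your proposal is correct and follows exactly the same route as the paper, which after establishing \eqref{G=0} simply says to repeat the analysis for $F_\u'$ and $F_\u''$. You fill in the bookkeeping the paper leaves implicit (irreducibility, emptiness of $\mathcal T$, the explicit formulas for $G_\u'$, $G_\u''$, and the variable renamings), and these checks are all accurate.
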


Assume that property (i') in Lemma~\ref{analytic3vars} does not hold.
We are now ready to prove that $f$ satisfies the identity given in property (ii') of Lemma~\ref{analytic3vars}.
Define
\begin{align*}
h_1(x,y,z,\u)&:=\frac{\tfrac{\partial f}{\partial y}(x,y,z,\u)}{\tfrac{\partial f}{\partial z}(x,y,z,\u)}
\\
h_2(x,y,z,\u)&:=
\frac{\tfrac{\partial f}{\partial x}(x,y,z,\u)}{\tfrac{\partial f}{\partial z}(x,y,z,\u)} 
\\
h_3(x,y,z,\u)&:=
\frac{\tfrac{\partial f}{\partial x}(x,y,z,\u)}{\tfrac{\partial f}{\partial y}(x,y,z,\u)}
\; ;
\end{align*}
note that, by our assumption, $f$ depends non-trivially in each of its variables and hence $h_1,h_2,h_3$ are well-defined rational functions.
By Lemma~\ref{summarize}, we have
\begin{align*}
\frac{\tfrac{\partial f}{\partial y}(x,y,z,\u)}{\tfrac{\partial f}{\partial z}(x,y,z,\u)}
&\equiv\frac{\tfrac{\partial f}{\partial y}(x',y,z,\u)}{\tfrac{\partial f}{\partial z}(x',y,z,\u)}\\
\frac{\tfrac{\partial f}{\partial x}(x,y,z,\u)}{\tfrac{\partial f}{\partial z}(x,y,z,\u)} 
&\equiv
\frac{\tfrac{\partial f}{\partial x}(x,y',z,\u)}{\tfrac{\partial f}{\partial z}(x,y',z,\u)}
\\
\frac{\tfrac{\partial f}{\partial x}(x,y,z,\u)}{\tfrac{\partial f}{\partial y}(x,y,z,\u)}
&\equiv
\frac{\tfrac{\partial f}{\partial x}(x,y,z',\u)}{\tfrac{\partial f}{\partial y}(x,y,z',\u)}
.
\end{align*}
This implies that in fact
\begin{align*}
h_1(x,y,z,\u)&=h_1(y,z,\u)
\\
h_2(x,y,z,\u)&=h_2(x,z,\u)
\\
h_3(x,y,z,\u)&=h_3(x,y,\u)
.
\end{align*}
Note also that, by definition, we have
\begin{equation}\label{h123}
h_2(x,z,\u)=h_1(y,z,\u)h_3(x,y,\u),
\end{equation}
so, in particular, 
$
h_1(y,z,\u)h_3(x,y,\u)
$
is independent of $y$. Fixing some generic $y_0\in\R$, we can write
\begin{equation}\label{rp}
h_2(x,z,\u)=\frac{\tfrac{\partial f}{\partial x}(x,y,z,\u)}{\tfrac{\partial f}{\partial z}(x,y,z,\u)} =\frac{h_3(x,y_0,\u)}{1/h_1(y_0,z,\u)}=\frac{p(x,\u)}{r(z,\u)},
\end{equation}
where $p(x,\u):={h_3(x,y_0,\u)}$ and $r(z,\u):=\tfrac{1}{h_1(y_0,z,\u)}$.

In a similar manner we see that 
$$
\frac{\tfrac{\partial f}{\partial y}(x,y,z,\u)}{\tfrac{\partial f}{\partial z}(x,y,z,\u)}
=h_1(y,z,\u)=\frac{h_2(x,z,\u)}{h_3(x,y,\u)}
$$ is independent of $x$, and so, substituting $x=x_0$, we get
$$
\frac{\tfrac{\partial f}{\partial y}(x,y,z,\u)}{\tfrac{\partial f}{\partial z}(x,y,z,\u)}=\frac{q(y,\u)}{\hat r(z,\u)},
$$
where $q(y,\u):=\tfrac{1}{h_3(x_0,y,\u)}$ and $\hat r(z,\u):= \tfrac{1}{h_2(x_0,z,\u)}$.

However, by \eqref{h123}, we have $h_2(x_0,z,\u)=h_1(y_0,z,\u)h_3(x_0,y_0,\u)$, so
$$
\hat r(z,\u)=\frac{1}{h_2(x_0,z,\u)}=\frac{1}{h_3(x_0,y_0,\u)}\cdot\frac{1}{h_1(y_0,z,\u)} =\frac{1}{h_3(x_0,y_0,\u)} r(z,\u).
$$
Therefore, we can redefine 
$q(y,\u):=h_3(x_0,y_0,\u)\frac{1}{h_3(x_0,y,\u)}$ and get
\begin{equation}\label{rq}
\frac{\tfrac{\partial f}{\partial y}(x,y,z,\u)}{\tfrac{\partial f}{\partial z}(x,y,z,\u)}=\frac{q(y,\u)}{r(z,\u)}.
\end{equation}
Combining \eqref{rp} and \eqref{rq}, we get
\begin{equation}\label{pqr}
\frac{\tfrac{\partial f}{\partial x}(x,y,z,\u)}{p(x,\u)}=
\frac{\tfrac{\partial f}{\partial y}(x,y,z,\u)}{q(y,\u)}=
\frac{\tfrac{\partial f}{\partial z}(x,y,z,\u)}{r(z,\u)}
\end{equation}
for all $x,y,z$ and $\u$, where each of $p, q,r$ is a rational functions in $l+1$ variables (which is not identically zero). This proves the identity in property (ii') of Lemma~\ref{analytic3vars} and hence completes the proof of the lemma.
\hfill$\qed$

\section{Proof of Lemma~\ref{3tod}}\label{sec:3tod}
Let $f\in\R[x_1,\ldots,x_n]$ have the property from the statement. 
We may assume, without loss of generality, that, for every permutation $\sigma$ fixed, the functions $r_{\sigma,1}$, $r_{\sigma,2}$, and $r_{\sigma,3}$ do not share any irreducible component.

Fix any permutation $\sigma$, and let $x=x_{\sigma(1)}$, $y=x_{\sigma(2)}$, $z=x_{\sigma(3)}$, and $w=x_{\sigma(4)}$. If $d>4$, let $\u=(x_{\sigma(5)},\ldots,x_{\sigma(d)})$ and otherwise let $\u$ be constant.
By assumption, we have
\begin{align}\label{pqr1}
\frac
{\tfrac{\partial f}{\partial x}}
{p_1(x,w,\u)}
&=
\frac
{\tfrac{\partial f}{\partial y}}
{q_1(y,w,\u)}
=\frac
{\tfrac{\partial f}{\partial z}}
{r_1(z,w,\u)}
\\
\label{pqr2}
\frac
{\tfrac{\partial f}{\partial x}}
{p_2(x,z,\u)}
&=
\frac
{\tfrac{\partial f}{\partial y}}
{q_2(y,z,\u)}
=\frac
{\tfrac{\partial f}{\partial w}}
{r_2(w,z,\u)}
\\
\label{pqr3}
\frac
{\tfrac{\partial f}{\partial y}}
{p_3(y,x,\u)}
&=
\frac
{\tfrac{\partial f}{\partial z}}
{q_3(z,x,\u)}
=\frac
{\tfrac{\partial f}{\partial w}}
{r_3(w,x,\u)},
\end{align}
for some $(d-2)$-variate rational functions $p_i,q_i,r_i$, $i=1,2,3$.
The identities \eqref{pqr1} and \eqref{pqr2} imply that
$$
\frac
{\tfrac{\partial f}{\partial x}}{\tfrac{\partial f}{\partial y}}
=
\frac
{p_1(x,w,\u)}
{q_1(y,w,\u)}
=\frac
{p_2(x,z,\u)}
{q_2(y,z,\u)},
$$
which shows that ${\tfrac{\partial f}{\partial x}}/{\tfrac{\partial f}{\partial y}}
$ is independent of $z$ and of $w$.
Thus, we can write
\begin{align*}
p_1(x,w,\u)&=\tilde p_1(x,\u)h_1(w,\u)\\
q_1(y,w,\u)&=\tilde q_1(y,\u)h_1(w,\u) 
\end{align*}
where $h_1$ is a rational functions taken to be ``minimal'', in the sense that each (non-constant) irreducible component of $h_1$ depends non-trivially on $w$.
In a similar way, the identities \eqref{pqr1} and \eqref{pqr3} imply
$$
\frac
{\tfrac{\partial f}{\partial y}}{\tfrac{\partial f}{\partial z}}
=
\frac
{q_1(y,w,\u)}
{r_1(z,w,\u)}
=\frac
{p_3(y,x,\u)}
{q_3(z,x,\u)}
$$
or
$$
\frac
{\tfrac{\partial f}{\partial y}}{\tfrac{\partial f}{\partial z}}
=
\frac
{\tilde q_1(y,\u)h_1(w,\u)}
{r_1(z,w,\u)}
=\frac
{p_3(y,x,\u)}
{q_3(z,x,\u)},
$$
which is independent of $w$ and of $x$.
Since each (non-constant) irreducible component of $h_1$ depends non-trivially in $w$, this implies that
$$
r_1(z,w,\u)=\tilde r_1(z,\u)h_1(w,\u).
$$
Recalling our assumption that $p_1,q_1,r_1$ have no common irreducible component, we conclude that $h_1(w,\u)$ is in fact a constant.
That is, each of $p_1,q_1,r_1$ is independent of the variable $w$.

By symmetry (applying the same argument, setting $w$ to be any of $\{x_{\sigma(4)},\ldots,x_{\sigma(d)}\}$), we conclude that $p_1=p_1(x)$, $q_1=q_1(y)$, and $r_1=r_1(z)$.

Repeating the same argument, setting $x=x_1$ $y=y_1$ and $z\in\{x_3,\ldots,x_d\}$, we get
\begin{align*}
\frac
{\tfrac{\partial f}{\partial x_1}}
{r_1(x_1)}
&=
\frac
{\tfrac{\partial f}{\partial x_2}}
{r_2(x_2)}
=\frac
{\tfrac{\partial f}{\partial x_3}}
{r_3(x_3)}
\\
\frac
{\tfrac{\partial f}{\partial x_1}}
{r_{1,j}(x_1)}
&=
\frac
{\tfrac{\partial f}{\partial x_2}}
{r_{2,j}(x_2)}
=\frac
{\tfrac{\partial f}{\partial x_i}}
{\tilde r_j(x_j)},~~j=4,\ldots,d,
\end{align*}
for some univariate rational functions $r_i, r_{i,j},\tilde r_j$, for $i=1,2,3$, $j=4,\ldots,d$.
This implies that
$$
\frac{r_{1,j}(x_1)}{r_1(x_1)}=\frac{r_{j,2}(x_2)}{r_2(x_2)},
$$
which must be independent of $x_1,x_2$, and hence equals some constant $c_j$.
Finally, setting $r_j(x_j):=\tilde r_j(x_i)/c_j$, the lemma follows. \hfill $\qed$

\section{Proof of Lemma~\ref{speform}}\label{sec:speform}
We follow an argument of Tao from 
\cite[Theorem~41]{Tao}, who proved Lemma~\ref{speform} for the special case where $d=2$.
The generalization to the case of $d$ variables is straightforward, up to certain needed adjustments. 

The proof can be divided into two parts. In the first part we show 
that $f$ has an additive structure, in a sense being made precise below (Theorem \ref{additive}). 
In the second part the concrete forms stated in the lemma are deduced. 

For the first step, we generalize the following statement from \cite{Tao} to the case of $d$ variables. 

\begin{thm}[{\bf Tao~\cite[Proposition 44]{Tao}}, Additive structure in two variables]\label{tao}
Let $f\in \C[x_1,x_2]$ and assume that $$\frac
{\tfrac{\partial f}{\partial x_1}(x_1,x_2)}
{r_1(x_1)}
=
\frac
{\tfrac{\partial f}{\partial x_2}(x_1,x_2)}
{r_2(x_2)},
$$
for some univariate rational functions $r_1,r_2$.
Then there exists an entire function $H:\C\to\C$ such that
$$
f(\gamma_1(1),\gamma_2(1))=H(\int_{\gamma_1}r_1+\int_{\gamma_2}r_2),
$$
whenever $\gamma_1,\gamma_2:[0,1]\to\C$ 
are smooth curves with $\gamma_1(0)=\gamma_2(0)=0$ and 
 images not containing any pole of $r_1$ and $r_2$.
\end{thm}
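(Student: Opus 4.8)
The plan is to define $H$ by transporting $f$ along curves and to show this is well-defined via the differential equation. First I would fix notation: let $R_i(\gamma_i):=\int_{\gamma_i}r_i(\zeta)\,d\zeta$ for a smooth curve $\gamma_i:[0,1]\to\C$ avoiding the poles of $r_i$ and with $\gamma_i(0)=0$. Since $r_i$ is a rational function, away from its finitely many poles it has a (multivalued, in general) primitive; but on a simply connected region the integral $R_i(\gamma_i)$ depends only on the endpoint $\gamma_i(1)$ unless $r_i$ has poles with nonzero residue, in which case it can genuinely depend on the homotopy class of $\gamma_i$ in $\C\setminus(\text{poles})$. The key computation is that along the pair of curves, the ``total derivative'' of $f(\gamma_1(t),\gamma_2(t))$ matches that of a function of $R_1(\gamma_1|_{[0,t]})+R_2(\gamma_2|_{[0,t]})$: writing $s_i(t):=\int_0^t r_i(\gamma_i(\tau))\gamma_i'(\tau)\,d\tau$ and $s(t)=s_1(t)+s_2(t)$, the chain rule gives
\begin{equation*}
\frac{d}{dt}f(\gamma_1(t),\gamma_2(t))
=\frac{\partial f}{\partial x_1}\gamma_1'(t)+\frac{\partial f}{\partial x_2}\gamma_2'(t)
=\lambda(\gamma_1(t),\gamma_2(t))\bigl(r_1(\gamma_1(t))\gamma_1'(t)+r_2(\gamma_2(t))\gamma_2'(t)\bigr),
\end{equation*}
where $\lambda:=\tfrac{\partial f}{\partial x_1}/r_1=\tfrac{\partial f}{\partial x_2}/r_2$ is the common ratio. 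So $\tfrac{d}{dt}f=\lambda\cdot s'(t)$; the remaining point is that $\lambda$, evaluated along the curves, is itself a function of $s(t)$ alone.

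The heart of the argument is therefore to prove that $\lambda(x_1,x_2)$ factors as $H'\bigl(R_1(x_1)+R_2(x_2)\bigr)$ for a single-variable function $H'$, at least locally. I would establish this by differentiating the identity $\lambda\cdot r_1 = \tfrac{\partial f}{\partial x_1}$ and $\lambda\cdot r_2=\tfrac{\partial f}{\partial x_2}$ crosswise: from equality of mixed partials of $f$ one gets $\tfrac{\partial}{\partial x_2}(\lambda r_1)=\tfrac{\partial}{\partial x_1}(\lambda r_2)$, i.e. $r_1\tfrac{\partial\lambda}{\partial x_2}=r_2\tfrac{\partial\lambda}{\partial x_1}$, which says exactly that $\lambda$ is constant along the characteristic curves $\{R_1(x_1)+R_2(x_2)=\text{const}\}$. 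Hence locally, near any point avoiding the poles and the zeros of $r_1,r_2$, there is an analytic $g$ with $\lambda(x_1,x_2)=g\bigl(R_1(x_1)+R_2(x_2)\bigr)$; one then sets $H$ to be a primitive of $g$ (adjusting the constant so $H(0)=f(0,0)$), and the chain-rule computation above integrates to $f(\gamma_1(1),\gamma_2(1))=H\bigl(R_1(\gamma_1)+R_2(\gamma_2)\bigr)$ for curves staying in that local region. The final step is to promote this to a globally defined \emph{entire} function $H:\C\to\C$. For this I would argue that the local pieces $H$ agree on overlaps (two curves reaching the same $(\gamma_1(1),\gamma_2(1))$ with the same value of $R_1+R_2$ must give the same $f$-value, by concatenating and running the ODE computation on the difference), so they glue to an analytic function on a domain; then, since $f$ is a polynomial and $r_1,r_2$ rational, the range of the map $(x_1,x_2)\mapsto R_1(x_1)+R_2(x_2)$ is dense in $\C$ and $H$ extends continuously, and a removable-singularity / monodromy argument (the poles of $r_i$ contribute only logarithmic branching of $R_i$, but $f$ itself is single-valued, forcing $H$ to absorb the branching and still be entire) shows $H$ is entire on all of $\C$.

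The main obstacle I expect is exactly this last globalization: controlling the monodromy coming from the poles of $r_1$ and $r_2$ with nonzero residue, so that $H$ is genuinely single-valued and entire rather than merely a germ or a multivalued function. Concretely, if $r_1$ has a pole at $a$ with residue $\rho$, then winding $\gamma_1$ around $a$ changes $R_1(\gamma_1)$ by $2\pi i\rho$, yet $f(\gamma_1(1),\gamma_2(1))$ is unchanged; one must deduce that $H$ is invariant under $w\mapsto w+2\pi i\rho$ \emph{and} under all such shifts simultaneously, and then check these periodicities are compatible with $H$ being entire (they are, since $H$ being bounded-type and periodic in enough independent directions, or simply the density argument, pins it down). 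This is where Tao's Appendix-style complex-analytic input (analytic continuation along curves, Picard-type or removable-singularity lemmas) is used; I would cite the Appendix for these facts and otherwise follow the structure above, noting that everything generalizes verbatim to $d$ variables by replacing $R_1+R_2$ with $R_1+\cdots+R_d$ in Lemma~\ref{speform}.
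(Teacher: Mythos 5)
The paper itself does not prove Theorem~\ref{tao}; it is imported verbatim from Tao~\cite[Proposition 44]{Tao} and used as a black box, so there is no in-paper proof to compare against. Judged on its own terms, your local argument is correct and is the right idea: with $\lambda:=\tfrac{\partial f}{\partial x_1}/r_1=\tfrac{\partial f}{\partial x_2}/r_2$, equality of mixed partials of $f$ gives $r_1\,\partial_{x_2}\lambda=r_2\,\partial_{x_1}\lambda$, so away from the zeros and poles of $r_1,r_2$ the function $\lambda$ factors locally through $R_1(x_1)+R_2(x_2)$, and integrating $\tfrac{d}{dt}f(\gamma_1(t),\gamma_2(t))=\lambda\cdot s'(t)$ produces a local holomorphic primitive $H$.

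The genuine gap is in the globalization, which you flag but do not fill, and part of your sketch of it is not an argument. The values $\int_{\gamma_1}r_1+\int_{\gamma_2}r_2$ attained by admissible curve pairs may omit an entire coset of the period group $\Gamma=\Gamma_1+\Gamma_2$ (this is exactly the content of Lemma~\ref{tao2}); when $\Gamma\neq 0$ this omitted set is an infinite discrete set, and a holomorphic function defined off such a set does \emph{not} extend to $\C$ merely because the set has empty interior --- one must prove local boundedness near each missing point so the singularities are removable. Tao's proof (and, at a later stage, the proof of Lemma~\ref{speform} in this paper) does this via a case analysis on $\rank\Gamma$ together with Liouville, Rouch\'e and Picard arguments; ``density plus continuous extension'' skips precisely this work. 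There is also a well-definedness issue you understate: your ODE argument gives constancy of $f$ along a \emph{connected} piece of a level set $\{R_1(x_1)+R_2(x_2)=c\}$, but to get a single-valued $H$ you need constancy across the whole (possibly disconnected, and multivalued in the $R_i$) level set, which is where the pole/monodromy analysis actually enters rather than being an afterthought. Finally, deferring the hard complex-analytic step to ``Tao's Appendix-style input'' is circular here, since Theorem~\ref{tao} \emph{is} Tao's result; a self-contained proof has to reproduce that analysis rather than cite it.
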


Our first step is to prove a $d$-dimensional version of the above theorem. 
\begin{thm}[{Additive structure in $d$ variables}]\label{additive}
Let $f\in \C[x_1,\ldots,x_d]$ be a complex polynomial of $d$ variables.
Assume that 
\begin{equation}\label{eq1}
\frac
{\tfrac{\partial f}{\partial x_1}(x_1,\ldots,x_d)}
{r_1(x_1)}
=\cdots=
\frac
{\tfrac{\partial f}{\partial x_d}(x_1,\ldots,x_d)}
{r_d(x_d)}
\end{equation}
for some univariate rational functions $r_1,\ldots,r_d$.
Then there exists an entire function $H:\C\to\C$ such that
$$
f(\gamma_1(1),\dots,\gamma_d(1))=H(\int_{\gamma_1}r_1+\int_{\gamma_2}r_2+\dots+\int_{\gamma_d}r_d),
$$
whenever $\gamma_1,\dots,\gamma_d:[0,1]\to\C$ 
are smooth curves with $\gamma_1(0)=\gamma_2(0)=\dots=\gamma_d(0)=0$ and 
 images not containing any pole of $r_1,\dots,r_d$.
\end{thm}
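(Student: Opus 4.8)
The plan is to reduce Theorem~\ref{additive} to the two-variable case of Theorem~\ref{tao} by an induction on $d$, peeling off one variable at a time and repeatedly invoking the $d=2$ additive structure statement. First I would observe that the hypothesis \eqref{eq1} has a strong internal consistency: if we abbreviate the common value of all the ratios by $R(x_1,\ldots,x_d)$, then $\partial f/\partial x_i = r_i(x_i)R$, and the compatibility of mixed partials $\partial^2 f/\partial x_i\partial x_j$ forces $r_i(x_i)\,\partial R/\partial x_j = r_j(x_j)\,\partial R/\partial x_i$ for all $i\ne j$, so that $R$ itself satisfies the same type of differential system as $f$ (this will be used to control the intermediate functions produced in the induction). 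Concretely, fixing $x_3,\ldots,x_d$ at generic constant values, $f$ as a function of $(x_1,x_2)$ satisfies the $d=2$ hypothesis, and Theorem~\ref{tao} gives an entire $H_{(x_3,\ldots,x_d)}$ with $f(\gamma_1(1),\gamma_2(1),x_3,\ldots,x_d)=H_{(x_3,\ldots,x_d)}\!\left(\int_{\gamma_1}r_1+\int_{\gamma_2}r_2\right)$; the goal is to show this $H$ can be chosen independent of $(x_3,\ldots,x_d)$ once we absorb the contributions $\int_{\gamma_j}r_j$ into its argument.

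The cleanest way to organize this is to introduce the ``potential'' coordinates $\xi_i := \int_{\gamma_i} r_i$ (path integrals from $0$), so that, locally and away from the poles of the $r_i$, the map $(x_1,\ldots,x_d)\mapsto(\xi_1,\ldots,\xi_d)$ is a local biholomorphism (since $r_i$ is not identically zero and $\partial\xi_i/\partial x_i = r_i(x_i)$), and \eqref{eq1} says precisely that $f$, pushed forward to the $\xi$-coordinates, has all first partials equal: $\partial \tilde f/\partial \xi_1 = \cdots = \partial \tilde f/\partial \xi_d$, where $\tilde f(\xi_1,\ldots,\xi_d) = f(x_1,\ldots,x_d)$. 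An analytic function on a connected open set whose $d$ first-order partials all coincide is a function of $\xi_1+\cdots+\xi_d$ alone — this is the elementary chain-rule observation that $\tilde f$ is constant along the directions $e_i - e_j$. Thus locally $\tilde f = H(\xi_1+\cdots+\xi_d)$ for some holomorphic $H$ defined on an open subset of $\C$. The two-variable Theorem~\ref{tao} is exactly the engine that upgrades this ``local $H$'' to a genuinely \emph{entire} $H$ and shows the representation is valid for \emph{all} admissible curves $\gamma_i$ (not just short ones staying in a single chart); I would either cite it as a black box applied after fixing $d-2$ of the variables and then argue that the resulting $H$ doesn't depend on the frozen variables (using that varying a frozen $x_j$ by a curve $\gamma_j$ shifts the argument of $H$ by $\int_{\gamma_j}r_j$, by the same local computation), or — more in the spirit of ``we generalize Tao's analysis'' — re-run Tao's monodromy/analytic-continuation argument verbatim with $d$ curves in place of two, since nothing in that argument is specific to two variables.

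The main obstacle, and where care is genuinely needed, is the passage from the \emph{local} analytic identity $\tilde f = H(\sum \xi_i)$ to the \emph{global} statement with an entire $H$ and arbitrary smooth curves $\gamma_i$ avoiding the poles of the $r_i$. Two issues arise: (a) the change of coordinates $x_i\mapsto\xi_i$ is only locally invertible and the poles of $r_i$ must be excised, so one must analytically continue $H$ along paths and check the continuation is single-valued — this is the heart of Tao's argument and relies on $f$ being a \emph{polynomial} (hence entire), which bounds the growth and rules out branching; (b) one must verify that the argument $\int_{\gamma_1}r_1+\cdots+\int_{\gamma_d}r_d$ is the ``right'' global variable, i.e. that two curve-tuples with the same value of this sum produce the same value of $f$ — this again is a monodromy statement, provable by continuously deforming one tuple of curves into the other while keeping the sum of the integrals fixed and invoking the local identity along the deformation. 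I expect the $d$-variable bookkeeping here to be entirely routine given Tao's two-variable treatment: one replaces the pair $(\gamma_1,\gamma_2)$ by $(\gamma_1,\ldots,\gamma_d)$ and the sum $\int_{\gamma_1}r_1+\int_{\gamma_2}r_2$ by $\sum_{i=1}^d\int_{\gamma_i}r_i$ throughout, and the connectedness/analytic-continuation arguments go through unchanged. Accordingly I would present the proof as: (1) the mixed-partials identity and the $\xi$-coordinate reformulation; (2) the local conclusion $\tilde f = H(\sum\xi_i)$; (3) invoke/adapt Tao's continuation argument to make $H$ entire and the identity global — flagging (3) as the step that genuinely uses that $f$ is a polynomial.
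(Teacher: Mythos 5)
Your proposal is essentially the paper's approach. The paper makes your ``potential coordinates'' idea precise in exactly the way you sketch in your first alternative for step (3): it fixes $d-2$ of the variables, applies Tao's two-variable Theorem~\ref{tao} to conclude that $f(\gamma_1(1),\dots,\gamma_d(1))$ depends only on the tuple $(\int_{\gamma_1}r_1,\dots,\int_{\gamma_d}r_d)$ (Lemma~\ref{lem1} and Corollary~4.5), thereby obtaining a separately-holomorphic $Q$ on $\Omega_1\times\cdots\times\Omega_d$ globally --- this is where the monodromy issue you flag is discharged, so there is no need to re-run Tao's continuation argument in $d$ variables. It then verifies $D_1Q=\cdots=D_dQ$ via a chain-rule lemma (Lemmas~\ref{prop} and \ref{lem2}), and concludes $Q$ factors through $\xi_1+\cdots+\xi_d$ (Lemma~\ref{lem-additive}), which is precisely your ``elementary chain-rule observation.'' Your opening mixed-partials remark is correct but not used in the paper's argument; the only substantive clarification I'd offer is that the cleanest way to realize your step (3) is to use Tao to establish the \emph{global} well-definedness of $Q$ on the $\xi$-domain first, and only then apply the local ``all partials equal'' reasoning to $Q$, rather than starting from a local $H$ and attempting analytic continuation --- the former sidesteps the single-valuedness verification you were worried about.
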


We prove the theorem in the following Section \ref{prfadditive}. 
Below we shortly describe the outline of the proof. 
\paragraph{Outline of the proof of Theorem \ref{additive}.}
First, it follows from the case 
where $d=2$ that there exist a dense open subset $\Omega\subset\C^d$ and a function 
$Q:\Omega\to \C$, 
holomorphic at each of its coordinates, such that 
$$
f(\gamma_1(1),\dots,\gamma_d(1))=Q(\int_{\gamma_1}r_1,\int_{\gamma_2}r_2,\dots,\int_{\gamma_d}r_d),
$$
whenever $\gamma_1,\dots,\gamma_d:[0,1]\to\C$, are as in Theorem \ref{additive}. 
In fact, a crucial part in Tao's proof of Theorem \ref{tao} is showing the existence of such $Q$. 
Then we show that $Q$ is actually a function of 
the sum $\sum_{i=1}^d\int_{\gamma_i}r_i$, from which the existence
of $H$ follows immediately. That $Q$ is a function of the above sum follows from the fact that
its partial derivatives are all equal to each other, that is
$$
D_1 Q=D_2Q=\dots=D_dQ,
$$
which is a consequence of $\eqref{eq1}$;
here $D_iQ$ stands for the derivative of $Q$ with respect to its $i$th variable.

\subsection{Proof of Theorem \ref{additive}}\label{prfadditive}

\paragraph{Notation for the proof.}
We follow Tao's notation from \cite{Tao}. 
Fix $1\le j\le d$ and let $r=r_j$. We can write 
$$
r(x_j)=\sum_{k=1}^m\frac{\alpha_k}{x_j-a_k}+\tilde{r}(x_j),
$$
Where $a_k$ are the simple poles of $r$ with residues $\alpha_k$, and $\tilde{r}$ is a rational function 
with no simple poles. Thus $\tilde{r}$ has a primitive $R$, which is a rational function, so that
$$
r(x_j)=\sum_{k=1}^m\frac{\alpha_k}{x_j-a_k}+R'(x_j),
$$
for all but finitely many $x_i\in\C$.

By translation we may assume that $a_1,\dots,a_m\ne0$, and that $R(0)=0$.
For any smooth curve $\gamma:[0,1]\to\C$ which avoids all of the poles of $r$, and starts at $\gamma(0)=0$, 
we have
\begin{equation}\label{eq17}
\int_\gamma r=\sum_{k=1}^m\alpha_k\Log{\frac{\gamma(1)-a_k}{a_k}}+R(\gamma(1)),
\end{equation}
where (by abuse of notation) $\Log{\frac{\gamma(1)-a_k}{a_k}}$ is one of the logarithms 
$\log{\frac{\gamma(1)-a_k}{a_k}}$ of $\frac{\gamma(1)-a_k}{a_k}$. 
In particular, we have 
$$
\int_{\gamma}r\in c_{\gamma(1)}+\Gamma_j,
$$ 
where 
$$
\Gamma_j=2\pi i\alpha_1\Z+2\pi i\alpha_2\Z+\dots+2\pi i\alpha_m\Z,
$$
and for any $x\in\C$, which is not a pole of $f$, $c_x+\Gamma_j$ denotes the coset 
$$
c_x+\Gamma_j=\sum_{k=1}^m\alpha_k\log{\frac{x-a_k}{a_k}}+R(x).
$$
Thus $c_x$ is only defined up to an additive error in $\Gamma_j$. Conversely, for any given end point 
$x\in\C$, which is not a pole of $r$, and any element of the coset $z=c_x+\Gamma_j$, one can find a 
smooth curve $\gamma:[0,1]\to\C$, from $0$ to $x$, avoiding all the poles of $r$, with $\int_{\gamma}r=z$.

We will make use of the following lemma from \cite{Tao}.
\begin{lem}[{\bf Tao~\cite[Lemma 42]{Tao}}, Almost surjectivity]\label{tao2}
For all complex numbers $z$ outside of at most one coset of $\Gamma_j$, there exists at least one smooth curve 
$\gamma : [0, 1] \to \C$ starting at $0$, avoiding all the poles of $r$, 
with $\int_{\gamma} r = z$. If $\Gamma_j$ is not trivial and is not a rank one lattice $\Gamma_j = 2\pi i\alpha\Z$, then the 
caveat ``outside of at most one coset of $\Gamma_j$'' in the previous claim may be deleted.
\end{lem}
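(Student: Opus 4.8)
The plan is to analyze, for the fixed index $j$ (writing $\Gamma:=\Gamma_j$), the set
$$
S:=\Big\{z\in\C\ :\ z=\textstyle\int_\gamma r_j\ \text{for some admissible curve }\gamma\Big\}
$$
of attainable values -- here ``admissible'' means smooth, starting at $0$, and avoiding all poles of $r_j$ -- and to show that $\C\setminus S$ is contained in at most one coset of $\Gamma$, with $\C\setminus S=\emptyset$ once $\Gamma$ is ``large''. By the partial-fraction form $r_j=\sum_{k=1}^m\frac{\alpha_k}{x_j-a_k}+R'$ and \eqref{eq17}, an admissible curve from $0$ to $x$ realizes exactly the values of the coset $c_x+\Gamma$ (the different branches of the logarithms account for winding around the $a_k$), and conversely every element of $c_x+\Gamma$ is realized by some admissible curve ending at $x$; hence $S=\bigcup_x\big(c_x+\Gamma\big)$, a $\Gamma$-invariant set, the union being over all $x\in\C$ that are not poles of $r_j$. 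Equivalently $S$ is the image of the multivalued primitive $\Phi(x)=\sum_k\alpha_k\Log\tfrac{x-a_k}{a_k}+R(x)$, and it remains to describe the complement of $S\bmod\Gamma$ in $\C/\Gamma$.

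The argument then splits according to the structure of the subgroup $\Gamma$ of $(\C,+)$. If $\Gamma=\{0\}$ (equivalently all residues $\alpha_k$ vanish), then $\Phi=R$ is a nonconstant rational function, hence onto $\widehat\C$, and since the only points removed from its domain are its poles, which only omit the value $\infty$, we get $S=\C$. The crucial case is the cyclic one, $\Gamma=2\pi i\alpha\Z$ with $\alpha\ne 0$: then each $\alpha_k/\alpha$ is an integer, so
$$
\psi(x):=\exp\!\big(\Phi(x)/\alpha\big)=e^{R(x)/\alpha}\prod_{k=1}^m\Big(\tfrac{x-a_k}{a_k}\Big)^{\alpha_k/\alpha}
$$
is a genuinely single-valued meromorphic function on $\C$, and $z\in S$ if and only if $e^{z/\alpha}\in\psi(\mathrm{dom}\,\psi)$; since $z\mapsto e^{z/\alpha}$ maps $\C$ onto $\C\setminus\{0\}$ with fibres the cosets of $\Gamma$, it suffices to show that $\psi$ omits at most one value of $\C\setminus\{0\}$ on its domain. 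If $R$ is nonconstant then $\psi$ has an essential singularity (at a pole of $R$, or at $\infty$), so by the big Picard theorem it omits at most one value of $\C$ and attains every other value infinitely often, in particular at points avoiding the finitely many removed poles. If $R$ is constant then $\psi$ is a nonconstant rational map, hence onto $\widehat\C$, and since $\psi$ sends each removed point $a_k$ into $\{0,\infty\}$, the only value of $\C\setminus\{0\}$ that can fail to be attained on the domain is $\psi(\infty)$, and only when $\psi$ has degree $1$. Either way $\C\setminus S$ lies in at most one coset of $\Gamma$.

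It remains to handle $\Gamma$ neither trivial nor cyclic, where full surjectivity must be shown. Fix a simple pole $a_k$, so $\alpha_k\ne0$; near $a_k$ one has $\Phi(x)=\alpha_k\Log(x-a_k)+g(x)$ with $g$ holomorphic, and an argument-principle (Rouch\'e) estimate shows that $\Phi$ maps a small punctured disc about $a_k$ onto a set containing a half-plane $\mathcal H=\{w:\operatorname{Re}(\bar\alpha_k w)<C\}$, whose boundary direction $i\alpha_k$ is parallel to the element $2\pi i\alpha_k\in\Gamma$. If $\Gamma$ also contains an element transverse to the line $i\alpha_k\R$, then $\mathcal H+\Gamma=\C$ and $S=\C$; this covers in particular every $\Gamma$ with $\operatorname{span}_{\R}\Gamma=\C$. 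In the one remaining subcase, where $\Gamma\subset i\alpha_k\R$ but $\Gamma$ is not cyclic -- i.e.\ the residues are real-proportional but incommensurable -- one instead uses that $\operatorname{Re}(\Phi/\alpha_k)$ is then a single-valued nonconstant harmonic function, unbounded above and below, whose level curves are proper curves along which $\operatorname{Im}(\Phi/\alpha_k)$ sweeps all of $\R$ (because each level curve winds around a logarithmic singularity), giving again $S=\C$. The main obstacle is exactly this last bookkeeping: rigorously producing a half-plane in the image near a logarithmic singularity, and checking in each non-cyclic subcase that the period group $\Gamma$ is wide enough to translate that half-plane over all of $\C$. By contrast, once one notices that in the cyclic case the exponents $\alpha_k/\alpha$ are integral, that case is a clean invocation of Picard's theorem.
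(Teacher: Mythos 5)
The paper does not actually prove this lemma; it imports it verbatim as Tao's Lemma~42, so there is no internal proof to compare against. Your reconstruction is in the right spirit (analyze the image of the multivalued primitive $\Phi$ and split on the structure of the period group $\Gamma$), and the trivial and cyclic cases are essentially sound, but the final subcase has a genuine gap, and there is one smaller overclaim.

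\textbf{The gap.} In the subcase where $\Gamma$ is nontrivial, non-cyclic, and contained in the line $i\alpha_k\R$, you assert that the level curves of $\operatorname{Re}(\Phi/\alpha_k)$ are proper curves ``along which $\operatorname{Im}(\Phi/\alpha_k)$ sweeps all of $\R$ (because each level curve winds around a logarithmic singularity).'' This is not justified and is false in general: near a pole $a_\ell$ with $\beta_\ell:=\alpha_\ell/\alpha_k>0$ we have $\operatorname{Re}(\Phi/\alpha_k)\approx\beta_\ell\ln|x-a_\ell|\to-\infty$, so only level sets at sufficiently negative levels produce small loops around $a_\ell$; a level set at a generic level need not encircle any logarithmic singularity at all, and a priori a connected component of a level set could be compact and bounded away from all the $a_\ell$, or could terminate at a critical point. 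The claim that $\operatorname{Im}(\Phi/\alpha_k)$ attains all of $\R$ on \emph{every} level set is therefore unsupported. The correct route in this subcase is to use the ingredients you already have more carefully: $S=\Phi(\C\setminus\{\text{poles}\})$ is open (as the image of a nonconstant holomorphic function) and $\Gamma$-invariant, and since $\Gamma$ is a finitely generated non-cyclic subgroup of the line $L=i\alpha_k\R$, it is \emph{dense} in $L$; hence any open set containing a point $z_0$ and invariant under $\Gamma$ contains a full open strip $\{w:|\operatorname{Re}((w-z_0)\bar\alpha_k)|<\delta\}$ around the line $z_0+L$. Combined with your (correct) observation that the single-valued harmonic function $\operatorname{Re}(\Phi/\alpha_k)$ is unbounded above and below — hence, being continuous on the connected set $\C\setminus\{\text{poles}\}$, attains \emph{every} real value — one gets that every line $\{\operatorname{Re}(w\bar\alpha_k)=c\}$ meets $S$, and therefore $S=\C$. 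As written, your argument does not close this loop.

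\textbf{A smaller issue.} In the trivial case $\Gamma=\{0\}$ you conclude $S=\C$, arguing that $R$ is onto $\widehat\C$ and the removed poles only omit $\infty$. This overlooks the value $R(\infty)$, which may be attained only at $\infty$ (e.g.\ $R(x)=\tfrac{1}{x-1}+1$ with the single pole $x=1$ removed misses the value $1$); so in general one only gets $S\supseteq\C$ minus one point — which is exactly the ``at most one coset of $\{0\}$'' the lemma allows, so this is harmless for the statement, but the assertion $S=\C$ is wrong as written. The cyclic case and the ``$\Gamma$ contains a transverse period'' subcase are handled correctly.
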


\paragraph{Proof of Theorem \ref{additive}.}

In what follows, unless stated otherwise, by a curve $\gamma$ we mean a smooth 
curve from $[0,1]$ to $\C$ that avoids all of the poles of $r_1,\dots,r_d$. 
We have the following lemma.

\begin{lem}\label{lem1}
Let $\gamma_2,\dots,\gamma_d$ be curves, starting at $0$. Then for any curves $\gamma_1,\tilde{\gamma}_1$
that start at $0$, and satisfy $\int_{\gamma_1}r_1=\int_{\tilde{\gamma}_1}r_1$, we have
$$
f(\gamma_1(1),\gamma_2(1),\dots,\gamma_d(1))=f(\tilde{\gamma}_1(1),\gamma_2(1),\dots,\gamma_d(1)).
$$
\end{lem}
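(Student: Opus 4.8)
The plan is to obtain Lemma~\ref{lem1} as a direct consequence of the two-variable case, Theorem~\ref{tao}, by freezing all but two of the variables. The key observation is that specialization preserves the differential identity \eqref{eq1}.

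Concretely, I would first set $b_j:=\gamma_j(1)$ for $j=3,\dots,d$ and consider the bivariate polynomial $g(x_1,x_2):=f(x_1,x_2,b_3,\dots,b_d)\in\C[x_1,x_2]$. Let $\Psi:=\dfrac{\partial f/\partial x_i}{r_i(x_i)}$ denote the common value appearing in \eqref{eq1}; this is a well-defined rational function (each $r_i\not\equiv0$), independent of $i$. Then $\partial g/\partial x_1(x_1,x_2)=r_1(x_1)\,\Psi(x_1,x_2,b_3,\dots,b_d)$ and $\partial g/\partial x_2(x_1,x_2)=r_2(x_2)\,\Psi(x_1,x_2,b_3,\dots,b_d)$, so
\[
\frac{\partial g/\partial x_1(x_1,x_2)}{r_1(x_1)}=\frac{\partial g/\partial x_2(x_1,x_2)}{r_2(x_2)}.
\]
Hence $g$ satisfies the hypothesis of Theorem~\ref{tao}, and there is an entire function $H_{b_3,\dots,b_d}:\C\to\C$, depending on the frozen values but not on any choice of curves, with
\[
g(\gamma_1(1),\gamma_2(1))=H_{b_3,\dots,b_d}\!\left(\int_{\gamma_1}r_1+\int_{\gamma_2}r_2\right)
\]
for all smooth curves $\gamma_1,\gamma_2:[0,1]\to\C$ starting at $0$ whose images avoid the poles of $r_1$ and $r_2$ (in particular for any curves satisfying the standing convention of this section, which avoid the poles of all of $r_1,\dots,r_d$).

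Now let $\gamma_1,\tilde{\gamma}_1$ be curves from $0$ with $\int_{\gamma_1}r_1=\int_{\tilde{\gamma}_1}r_1$, and keep the same $\gamma_2$. Applying the displayed identity twice gives
\[
f(\gamma_1(1),\gamma_2(1),b_3,\dots,b_d)=H_{b_3,\dots,b_d}\!\left(\int_{\gamma_1}r_1+\int_{\gamma_2}r_2\right)=H_{b_3,\dots,b_d}\!\left(\int_{\tilde{\gamma}_1}r_1+\int_{\gamma_2}r_2\right)=f(\tilde{\gamma}_1(1),\gamma_2(1),b_3,\dots,b_d),
\]
which is exactly the assertion of Lemma~\ref{lem1}; note that no assumption $\gamma_1(1)=\tilde{\gamma}_1(1)$ is needed, since the right-hand side of Theorem~\ref{tao} depends on the curves only through the sum of the period integrals.

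I do not expect a genuine obstacle here: the substance is entirely contained in Theorem~\ref{tao}, and this lemma is just the remark that freezing $d-2$ variables leaves its hypotheses intact. The only points requiring (routine) care are that $g$ is a polynomial — clear, being a specialization of $f$ — and the compatibility of the pole-avoidance conventions in the $d$-variable and two-variable settings, which holds by the standing convention above; one may also note the degenerate case in which the specialization forces $\Psi\equiv0$, whence $g$ is constant and the conclusion is immediate.
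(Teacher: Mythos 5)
Your proposal is correct and follows exactly the paper's argument: the paper also defines $\tilde f(x_1,x_2):=f(x_1,x_2,\gamma_3(1),\dots,\gamma_d(1))$, observes that it inherits the two-variable differential identity from \eqref{eq1}, and applies Theorem~\ref{tao} to conclude. You merely spell out in more detail (via the common ratio $\Psi$) why the specialization preserves the hypothesis, which the paper leaves implicit.
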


\begin{proof}
Define $\tilde{f}(x_1,x_2):=f(x_1,x_2,\gamma_3(1),\dots,\gamma_d(1))$. Then $\tilde{f}$ is a polynomial in two 
variables and it satisfies \eqref{eq1}. Thus by Theorem \ref{tao} there exists a holomorphic 
function $\tilde{H}:\C\to\C$ such that
$$
\tilde{f}(\eta_1(1),\eta_2(1))=\tilde{H}(\int_{\eta_1}r_1+\int_{\eta_2}r_2),
$$ 
whenever $\eta_1$ and $\eta_2$ are curves that start at $0$. In particular 
$$
\tilde{f}(\gamma_1(1),\gamma_2(1))=\tilde{f}(\tilde{\gamma}_1(1),\gamma_2(1)).
$$
That is, 
$$
f(\gamma_1(1),\gamma_2(1),\dots,\gamma_d(1))=f(\tilde{\gamma}_1(1),\gamma_2(1),\dots,\gamma_d(1)).\qedhere
$$
\end{proof}

Let $\Omega_1$ be the set of all $z$ such that $z=\int_{\gamma_1}r_1$ 
for some curve $\gamma_1$ starting at $0$. 
Similarly define $\Omega_i$ for every $1\le i\le d$. It follows by Lemma \ref{tao2} that for each $i$,
$\Omega_i$ contains the complement of a coset of the discrete subgroup $\Gamma_i$. In particular $\Omega_i$ is open. 
By repeating the argument in the proof of Lemma \ref{lem1} for each coordinate separately, 
and since the functions $\tilde{H}$ in that proof are holomorphic, we get the following corollary.  
\begin{cor}
There exists a function $Q:\Omega_1\times\dots\times\Omega_d\to\C$, holomorphic in each of its 
coordinates,\footnote{For a function $f:\Omega_1\times\dots\times\Omega_d\to\C$, where $\Omega_1,\dots,\Omega_d$ are open subsets of $\C$, 
we say that $f$ is holomorphic in the first coordinate if for any fixed $(x_2,\dots,x_d)\in \Omega_2\times\dots\times\Omega_d$, the function
$f(\cdot,x_2,x_3,\dots,x_d)$ is holomorphic.} such that 
$$
f(\gamma_1(1),\dots,\gamma_d(1))=Q(\int_{\gamma_1}r_1,\dots,\int_{\gamma_d}r_d),
$$ 
whenever $\gamma_1,\dots,\gamma_d$ are curves starting at $0$. 
\end{cor}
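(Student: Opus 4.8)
The plan is to define $Q$ by the only formula available: given $(z_1,\dots,z_d)\in\Omega_1\times\dots\times\Omega_d$, choose for each $i$ a curve $\gamma_i$ starting at $0$ with $\int_{\gamma_i}r_i=z_i$ — such a curve exists by the very definition of $\Omega_i$ — and set $Q(z_1,\dots,z_d):=f(\gamma_1(1),\dots,\gamma_d(1))$. I would then check two things: that this value does not depend on the choice of the $\gamma_i$, and that the resulting function is holomorphic in each of its coordinates.

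For well-definedness, I would lean on Lemma~\ref{lem1}, which says precisely that, with $\gamma_2,\dots,\gamma_d$ fixed, the value $f(\gamma_1(1),\dots,\gamma_d(1))$ is unchanged when $\gamma_1$ is replaced by any curve $\tilde\gamma_1$ with $\int_{\tilde\gamma_1}r_1=\int_{\gamma_1}r_1$. The proof of that lemma used only the two-variable identity relating $\partial f/\partial x_1$ and $\partial f/\partial x_2$ (which is part of \eqref{eq1}) together with Theorem~\ref{tao}; since \eqref{eq1} is symmetric in all $d$ coordinates, the very same argument, with the $k$-th coordinate paired against some other one and the remaining ones frozen, gives the analogous statement for every coordinate $k$. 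Then, given two tuples $(\gamma_1,\dots,\gamma_d)$ and $(\gamma_1',\dots,\gamma_d')$ realizing the same $(z_1,\dots,z_d)$, I would move from one to the other one coordinate at a time, applying these $d$ statements in turn, to conclude $f(\gamma_1(1),\dots,\gamma_d(1))=f(\gamma_1'(1),\dots,\gamma_d'(1))$. The domain $\Omega_1\times\dots\times\Omega_d$ is nonempty and open because, by Lemma~\ref{tao2}, each $\Omega_i$ contains the complement of a single coset of the discrete group $\Gamma_i$.

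For holomorphicity, fix $(z_2,\dots,z_d)$ and curves $\gamma_2,\dots,\gamma_d$ realizing them. As in the proof of Lemma~\ref{lem1}, the bivariate polynomial $\tilde f(x_1,x_2):=f(x_1,x_2,\gamma_3(1),\dots,\gamma_d(1))$ satisfies the hypothesis of Theorem~\ref{tao}, which supplies an \emph{entire} function $\tilde H:\C\to\C$ with $\tilde f(\eta_1(1),\eta_2(1))=\tilde H\!\left(\int_{\eta_1}r_1+\int_{\eta_2}r_2\right)$ for all curves $\eta_1,\eta_2$ starting at $0$. Taking $\eta_1$ to realize an arbitrary $z_1\in\Omega_1$ and $\eta_2=\gamma_2$, and using the well-definedness already established, this gives $Q(z_1,z_2,\dots,z_d)=\tilde H(z_1+z_2)$ for every $z_1\in\Omega_1$; since $\tilde H$ is entire, this exhibits $Q(\cdot,z_2,\dots,z_d)$ as holomorphic on $\Omega_1$. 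By the symmetry of the hypotheses the same argument works for each coordinate, which would finish the proof.

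I do not expect a serious obstacle here: all of the substantive work is carried by Theorem~\ref{tao} (Tao's two-variable result, whose proof contains the genuinely hard analytic step, namely that the relevant two-variable potential is entire) and by Lemma~\ref{lem1}. The only place where a little care is needed is the well-definedness step — I would want to be sure that Lemma~\ref{lem1} and its proof really are symmetric under permuting the variables of $f$ (they are, since \eqref{eq1} puts all the $x_i$ on an equal footing), and that the curves used to pair up coordinates remain admissible, which is automatic because every $\gamma_i$ avoids the poles of every $r_j$.
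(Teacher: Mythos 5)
Your proposal is correct and follows essentially the same route as the paper: the paper's (very terse) justification of the corollary is exactly that repeating the argument of Lemma~\ref{lem1} coordinate by coordinate gives well-definedness of $Q$, and that holomorphicity in each variable follows because the functions $\tilde{H}$ produced by Theorem~\ref{tao} are entire. Your write-up simply makes both steps explicit.
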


We need the following version of the chain rule; the proof is technical and standard and we provide it here for completeness.
\begin{lem}\label{prop}
Let $U_1,U_2\subset\C$ be two open sets and
 $\tilde{f}:U_1\to\C$, $\tilde{Q}:U_2\to\C$  two holomorphic functions. Assume that
$$
\tilde{f}(\gamma(1))=\tilde{Q}(\int_{\gamma}r),
$$ 
for every smooth $\gamma:[0,1]\to\C$ for which expression above makes sense. That is, 
$r$ is analytic at every point in the image of $\gamma$,
$\gamma_1\in U_1$, $\int_{\gamma}\tilde{f}\in U_2$.
Then for any $\gamma$ as above
$$
\tilde{f}'(\gamma(1))=\tilde{Q}'(\int_{\gamma}r)\cdot r(\gamma(1)).
$$
\end{lem}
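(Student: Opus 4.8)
The plan is to reduce the statement to the ordinary chain rule for holomorphic functions of one variable, by exhibiting $\tilde f$, near the endpoint of a fixed curve, as a composition of $\tilde Q$ with a single holomorphic function. Fix an admissible curve $\gamma$ (so its image avoids the poles of $r$, $\gamma(1)\in U_1$, and $\int_\gamma r\in U_2$), and write $x:=\gamma(1)$ and $z:=\int_\gamma r$. Since $U_1$ is open and $r$ is analytic at $x$, I would choose a disc $D$ centred at $x$ with $D\subset U_1$ and with $D$ containing no pole of $r$. Shrinking $D$ if necessary, we may also assume $z+\int_{[x,w]}r\in U_2$ for every $w\in D$, where $[x,w]$ is the straight segment from $x$ to $w$; this is possible because $U_2$ is open and $w\mapsto\int_{[x,w]}r$ is continuous and vanishes at $w=x$.

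Define $\phi:D\to\C$ by $\phi(w):=\int_{[x,w]}r=(w-x)\int_0^1 r\bigl(x+t(w-x)\bigr)\,dt$. Differentiating under the integral sign --- legitimate since $r$ is holomorphic on $D$ --- shows that $\phi$ is holomorphic on $D$ with $\phi(x)=0$ and $\phi'\equiv r$ on $D$; in other words $\phi$ is the primitive of $r$ on $D$ normalised to vanish at $x$, and in particular $\phi'(x)=r(x)$.

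Next, for each $w\in D$ I would build an admissible smooth curve $\gamma_w:[0,1]\to\C$ from $0$ to $w$ by traversing a reparametrisation of $\gamma$ on $[0,\tfrac12]$, chosen so that all derivatives of the reparametrised curve vanish at $t=\tfrac12$, and then traversing $[x,w]$ on $[\tfrac12,1]$, likewise with all derivatives vanishing at $t=\tfrac12$. The resulting curve is $C^\infty$, starts at $0$, and has image contained in the union of the image of $\gamma$ and $D$, hence avoids every pole of $r$; its endpoint $w$ lies in $U_1$; and, since the value of a contour integral is invariant under reparametrisation and additive under concatenation, $\int_{\gamma_w}r=\int_\gamma r+\int_{[x,w]}r=z+\phi(w)\in U_2$. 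Thus $\gamma_w$ is admissible, and the hypothesis of the lemma gives $\tilde f(w)=\tilde Q\bigl(z+\phi(w)\bigr)$ for every $w\in D$.

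Finally, both sides of the identity $\tilde f(w)=\tilde Q\bigl(z+\phi(w)\bigr)$ are holomorphic in $w$ on $D$ --- the left side by assumption, the right side as a composition of holomorphic maps --- so the ordinary chain rule applies, and differentiating at $w=x$ gives $\tilde f'(x)=\tilde Q'\bigl(z+\phi(x)\bigr)\,\phi'(x)=\tilde Q'(z)\,r(x)=\tilde Q'\bigl(\int_\gamma r\bigr)\,r(\gamma(1))$, which is exactly the claimed formula; since $\gamma$ was an arbitrary admissible curve, this finishes the proof. The only slightly delicate point is the construction of the smooth curve $\gamma_w$ together with the additivity $\int_{\gamma_w}r=z+\phi(w)$: this is where the hypothesis's requirement that curves be \emph{smooth} must be accommodated, and it is handled by the standard device of smoothing the corner at the concatenation point via a flat (infinitely tangent) reparametrisation. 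Everything else is routine.
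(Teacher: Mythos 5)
Your proof is correct and follows essentially the same path as the paper's: express $\tilde f(w)$, for $w$ near $\gamma(1)$, as $\tilde Q\bigl(z + (\text{primitive of }r)(w)\bigr)$ by appending a short segment to $\gamma$, then differentiate. You are in fact a bit more careful than the paper --- you explicitly flatten the concatenation point so the composite curve remains $C^\infty$ as the lemma's hypothesis requires, you track which open set the disc must sit in, and you invoke the one-variable chain rule on the resulting holomorphic identity instead of grinding through the difference quotient (thereby avoiding the division-by-$R(\gamma(1)+h)-R(\gamma(1))$ caveat the paper has to patch).
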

\begin{proof}
Let $\gamma:[0,1]\to\C$ as above, and denote $z=\int_{\gamma}r$. 
There is a small disc $D\subset U_2$ containing $\gamma(1)$ in which $r$
is holomorphic. Take a primitive $R$ of $r$ in $D$. Then $R$ is holomorphic in $D$ and $R'=r$. 
Let $w\in D$, and define $\eta:[0,1]\to\C$ to be the concatenation $\eta:=\gamma+[\gamma(1),w]$
of $\gamma$ and the interval $[\gamma(1),w]$. Then we have,  
$$
\tilde{f}(w)=\tilde{Q}(\int_{\eta}r).
$$
On the other hand, if we set $w=\gamma(1)+h$ then

\begin{align*}
\frac{\tilde{f}(\gamma(1)+h)-\tilde{f}(\gamma(1))}{h}&=\frac{\tilde{Q}(\int_{\eta}r)-\tilde{Q}(z)}{h}\\
&=\frac{\tilde{Q}(z+\int_{[\gamma(1),w]}r)-\tilde{Q}(z)}{h}\\
&=\frac{\tilde{Q}(z+R(w)-R(\gamma(1)))-\tilde{Q}(z)}{h}\\
&=\frac{\tilde{Q}(z+R(\gamma(1)+h)-R(\gamma(1)))-\tilde{Q}(z)}{h}\\
&=\frac{\tilde{Q}(z+R(\gamma(1)+h)-R(\gamma(1)))-\tilde{Q}(z)}{R(\gamma(1)+h)-R(\gamma(1))}
\cdot\frac{R(\gamma(1)+h)-R(\gamma(1)}{h}.\\ 
\end{align*}
(Note that $R(\gamma(1)+h)-R(\gamma(1))\ne0$ for small enough $0\ne h$ because 
$h\mapsto R(\gamma(1)+h)-R(\gamma(1))$ is holomorphic (and non-constant) 
in a small neighborhood of $0$, and thus have 
only a finite number of zeros in that neighborhood.)
Taking $h\to0$ we get 
$$
\tilde{f}'(\gamma(1))=\lim_{h\to0}\frac{\tilde{f}(\gamma(1)+h)-\tilde{f}(\gamma(1))}{h}=\tilde{Q}'(z)R'(\gamma(1))=\tilde{Q}'(z)r(\gamma(1)).
$$
\end{proof}

\begin{lem}\label{lem2}
Let $Q:\Omega_1\times\dots\times\Omega_d\to\C$ be a holomorphic function in each of its coordinates, 
and assume that
$$
f(\gamma_1(1),\dots,\gamma_d(1))=Q(\int_{\gamma_1}r_1,\dots,\int_{\gamma_d}r_d),
$$
whenever $\gamma_1,\dots,\gamma_d$ are curves starting at $0$. Then for every 
$\gamma(1)=(\gamma_1(1),\dots,\gamma_d(1))\in\Omega_1\times\dots\times\Omega_d$,
$$
D_1Q(\gamma(1))=\dots=D_dQ(\gamma(1)).
$$
\end{lem}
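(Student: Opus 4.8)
The plan is to reduce everything to the chain‑rule lemma just proved (Lemma~\ref{prop}). The key observation is that the defining hypothesis \eqref{eq1} says exactly that, along any single coordinate direction, $f$ depends on that coordinate through $r_i$ in the precise sense required by Lemma~\ref{prop}. So I would fix a point $\gamma(1)=(\gamma_1(1),\dots,\gamma_d(1))$ in $\Omega_1\times\dots\times\Omega_d$, fix an index $i$, and freeze the other $d-1$ coordinates at their given curves $\gamma_j$, $j\ne i$. This produces a univariate holomorphic function $x_i\mapsto f(\gamma_1(1),\dots,x_i,\dots,\gamma_d(1))$ and, via $Q$, a univariate holomorphic function $z\mapsto Q(\int_{\gamma_1}r_1,\dots,z,\dots,\int_{\gamma_d}r_d)$ of its $i$th slot, and the hypothesis of the corollary says these are related exactly as in Lemma~\ref{prop} (with $r=r_i$). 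Hence Lemma~\ref{prop} gives
$$
\frac{\partial f}{\partial x_i}(\gamma_1(1),\dots,\gamma_d(1)) \;=\; D_iQ\!\left(\int_{\gamma_1}r_1,\dots,\int_{\gamma_d}r_d\right)\cdot r_i(\gamma_i(1)).
$$

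**Next**, I would divide this identity by $r_i(\gamma_i(1))$ — which is legitimate because the curves avoid the poles of $r_i$, but one must also check $r_i$ is not identically zero and that $\gamma_i(1)$ is not a zero of $r_i$; a zero of $r_i$ would force $\tfrac{\partial f}{\partial x_i}$ to vanish there, and since $\tfrac{\partial f}{\partial x_i}/r_i$ equals the other ratios in \eqref{eq1} it is a fixed rational function not identically zero, so the set of bad points is a proper subvariety and can be avoided by slightly perturbing $\gamma_i(1)$ within $\Omega_i$; holomorphicity of $D_iQ$ then extends the conclusion to all of $\Omega_1\times\dots\times\Omega_d$ by the identity theorem. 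After the division I get
$$
D_iQ\!\left(\int_{\gamma_1}r_1,\dots,\int_{\gamma_d}r_d\right)\;=\;\frac{\tfrac{\partial f}{\partial x_i}(\gamma_1(1),\dots,\gamma_d(1))}{r_i(\gamma_i(1))},
$$
and by hypothesis \eqref{eq1} the right‑hand side is independent of $i$. Since this holds for every choice of curves $\gamma_1,\dots,\gamma_d$ and, by Lemma~\ref{tao2}, the tuples $\left(\int_{\gamma_1}r_1,\dots,\int_{\gamma_d}r_d\right)$ fill out a dense (in fact co‑small) subset of $\Omega_1\times\dots\times\Omega_d$, we conclude $D_1Q=\dots=D_dQ$ on that dense set, hence everywhere by continuity. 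That is the assertion of Lemma~\ref{lem2}.

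**The main obstacle** I anticipate is purely bookkeeping rather than conceptual: making precise that "freeze the other coordinates" is a legitimate application of Lemma~\ref{prop}, i.e.\ that the frozen function $\tilde f(x_i):=f(\dots,\gamma_j(1),\dots,x_i,\dots)$ and the frozen function $\tilde Q$ of $Q$'s $i$th slot satisfy the exact hypothesis of that lemma for every admissible curve in the $x_i$‑plane — this is immediate from the corollary but needs to be spelled out — and handling the degenerate cases (poles and zeros of $r_i$, the at‑most‑one excluded coset in Lemma~\ref{tao2}) so that the equality of partials is established on a set large enough for the identity theorem to propagate it everywhere. None of these steps is deep; the work is in writing the reduction cleanly.
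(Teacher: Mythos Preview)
Your proposal is correct and follows essentially the same approach as the paper: freeze all but one coordinate, apply the chain-rule lemma (Lemma~\ref{prop}) to obtain $\partial_i f = (D_iQ)\cdot r_i$, divide by $r_i$, invoke \eqref{eq1}, and handle the zeros of $r_i$ by a perturbation-plus-continuity argument. One minor remark: your final density step via Lemma~\ref{tao2} is unnecessary, since each $\Omega_i$ is \emph{defined} as the set of values $\int_{\gamma_i}r_i$, so the tuples already exhaust $\Omega_1\times\cdots\times\Omega_d$.
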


\begin{proof}
To begin, let us assume first that $r_1(\gamma_1(1)),\dots,r_d(\gamma_d(1))\ne0$.
Let $\tilde{f}=f(\cdot,\gamma_2(1),\dots,\gamma_d(1))$. Then 
$$
\tilde{f}(\gamma(1))=Q(\int_{\gamma}r_1,\int_{\gamma_2}r_2,\dots,\int_{\gamma_d}r_d),
$$
for any curve $\gamma$ that starts at $0$. By Proposition \ref{prop} we get 
$$
D_1f(\gamma_1(1),\dots,\gamma_d(1))=\tilde{f}'(\gamma_1(1))=D_1Q(\int_{\gamma_1}r_1,\dots,\int_{\gamma_d}r_d)\cdot r_1(\gamma_1(1)).
$$
Similarly, for every $1\le i\le d$
$$
D_if(\gamma_1(1),\dots,\gamma_d(1))=D_iQ(\int_{\gamma_1}r_1,\dots,\int_{\gamma_d}r_d)\cdot r_i(\gamma_i(1)).
$$
Dividing both of the sides of the above equality by $r_i(\gamma_i(1))$ we get
$$
\frac{D_if(\gamma_1(1),\dots,\gamma_d(1))}{r_i(\gamma_i(1))}=D_iQ(\int_{\gamma_1}r_1,\dots,\int_{\gamma_d}r_d).
$$
By \eqref{eq1} we get that $D_1Q=\dots=D_dQ$. 
We now show that we may  remove the assumption that $r_1(\gamma_1(1)),\dots,r_d(\gamma_d(1))\ne0$.
Let $(y_1,\dots,y_d)\in\Omega_1\times\dots\times\Omega_d$. 
We claim that there exists $\epsilon>0$ so that for every $(x_1,\dots,x_d)$ with
$0<|x_i-y_i|<\epsilon$ (for all $i$) there are 
curves $\tilde{\gamma}_1,\dots,\tilde{\gamma}_d$ so that $\tilde{\gamma}_i(0)=0$, 
$\int_{\tilde{\gamma}_i}r_i=x_i$, and $r_i(\tilde{\gamma}_i(1))\ne0$.
Given that, the argument above and the smoothness of $Q$ imply that $D_1Q=\dots=D_dQ$ at $(y_1,\dots,y_d)$.
We show that there is such $\epsilon>0$.

Let $\gamma_1$ be such that $\int_{\gamma_1}r_1=y_1$. There is a punctured disk $D$ containing $\gamma_1(1)$, so that 
$r_1$ is holomorphic in $D$, and $r_1\ne0$ in $D$. 
Let $R_1$ be a holomorphic primitive of $r_1$ in $D$. 
For every $w\in D$, we have 
$$
\int_{\gamma_1+[\gamma_1(1),w]}r_1=y_1+R_1(w)-R_1(\gamma_1(1)).
$$
Since the function on the right hand side is holomorphic in $D$, as a function of $w$, it is in particular open. 
Thus, since $y_1$ belongs to its image, there is $\epsilon_1>0$ such that any $x_1$ 
with $0<|x_1-y_1|<\epsilon_1$ belongs to its image. Thus for any such $x_1$ there is a curve 
$\tilde\gamma_1=\gamma_1+[\gamma_1(1),w]$ so that $\int_{\tilde{\gamma}_1}r_1=x_1$, $r_1(\tilde{\gamma}_1(1))\ne0$, and 
$\tilde{\gamma}_1(0)=0$.
By repeating the above argument for every $1\le i\le d$ we get $\epsilon_1,\dots, \epsilon_d$. Take 
$\epsilon=\min_i\epsilon_i$. 
\end{proof}

\begin{lem}\label{lem-additive}
Let $Q:\Omega_1\times\dots\times\Omega_d\to\C$ 
be a function holomorphic in each of its coordinates. Assume that 
$$
D_1Q=\dots=D_dQ.
$$
Then $Q(x_1,\dots,x_d)=Q(x_1',\dots,x_d')$ whenever $\sum x_i=\sum x_i'$. 
\end{lem}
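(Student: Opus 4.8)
The plan is to fix the value $S$ of the common sum and to show that $Q$ is constant on the affine slice
$$
L_S:=\{(z_1,\dots,z_d)\in\Omega_1\times\dots\times\Omega_d\mid z_1+\dots+z_d=S\};
$$
the assertion of the lemma is exactly this statement, for every $S\in\C$. Although $Q$ is only assumed to be holomorphic in each coordinate separately, by Hartogs' theorem on separate analyticity $Q$ is in fact jointly holomorphic on the open set $\Omega_1\times\dots\times\Omega_d$. I would then parametrize $L_S$ by its first $d-1$ coordinates and set
$$
\tilde Q(z_1,\dots,z_{d-1}):=Q\bigl(z_1,\dots,z_{d-1},\,S-z_1-\dots-z_{d-1}\bigr),
$$
which (being the composition of the jointly holomorphic $Q$ with an affine map) is holomorphic on the open set
$$
W_S:=\{(z_1,\dots,z_{d-1})\in\C^{d-1}\mid z_i\in\Omega_i\ (1\le i\le d-1),\ S-z_1-\dots-z_{d-1}\in\Omega_d\}.
$$
By the chain rule together with the hypothesis $D_1Q=\dots=D_dQ$, for each $1\le j\le d-1$ one gets $\partial_{z_j}\tilde Q=D_jQ-D_dQ=0$, so $\tilde Q$ is locally constant on $W_S$.

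Everything then reduces to showing that $W_S$ is connected, and this is the step I expect to require the most care. By Lemma~\ref{tao2} (and the definition of the $\Omega_i$), each $\Omega_i$ is the complement in $\C$ of a set $E_i$ contained in a single coset of the discrete subgroup $\Gamma_i$, so each $E_i$ is a discrete, closed subset of $\C$. Hence $W_S$ is the complement in $\C^{d-1}$ of
$$
\Bigl(\bigcup_{i=1}^{d-1}\{z_i\in E_i\}\Bigr)\cup\{z_1+\dots+z_{d-1}\in S-E_d\},
$$
which is a locally finite union of complex affine hyperplanes, and in particular a closed subset of real codimension two. Removing such a set from $\C^{d-1}$ leaves a path-connected set: given $p,q\in W_S$, for a generic choice of an intermediate point $w$ the concatenation of the two segments $p\to w$ and $w\to q$ avoids the removed set, since for each of the countably many hyperplanes $H$ and each of the two segments the set of $w$ for which that segment meets $H$ has measure zero once the segment's endpoints lie off $H$, and a countable union of measure-zero sets has measure zero. (This last point is precisely why one cannot merely invoke the familiar fact that a \emph{finite} union of codimension-two subspaces does not disconnect the ambient space.) It follows that $\tilde Q$ is constant on $W_S$, hence $Q$ is constant on $L_S$, which proves the lemma.

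I would also note an alternative that avoids the higher-dimensional connectivity statement: one can induct on $d$, the base case $d=2$ being the argument above carried out in one complex dimension, where the slice $W_S$ is simply $\C$ minus a discrete set and is obviously connected. Applying the $d=2$ conclusion to a pair of coordinates while holding the remaining coordinates fixed shows that $Q$ is unchanged whenever two coordinates are altered so as to preserve their sum; and any two tuples with the same total sum can be linked by finitely many such moves, inserting a small generic perturbation of an auxiliary third coordinate whenever the value forced on some coordinate would otherwise fall into its exceptional set $E_i$. Either route yields the stated conclusion.
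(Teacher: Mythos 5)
Your proof is correct and follows the same route as the paper's: parametrize the constant-sum slice by $d-1$ of the coordinates, observe that the chain rule together with $D_1Q=\cdots=D_dQ$ makes all partial derivatives of the restricted function vanish, and conclude that the restriction is constant. The one place you do more work than the paper is the connectedness of the slice $W_S$: the paper passes from ``gradient is zero'' to ``constant'' without comment, which silently uses that the slice is connected; you supply this step explicitly, correctly observing that it relies on the specific structure of the $\Omega_i$ coming from Lemma~\ref{tao2} (each is the complement of a closed discrete set), so that $W_S$ is $\C^{d-1}$ with a locally finite union of complex affine hyperplanes removed, whose complement is indeed path-connected by the generic-segments argument you give. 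That addition is a genuine improvement in rigor, though the underlying strategy is identical.
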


\begin{proof}
Define $T:\Omega_1\times\dots\times\Omega_d\to\C^d$ by $(x_1,\dots,x_d)\mapsto(\sum_i x_i,x_2,\dots,x_d)$. 
Let $\Omega\subset\C^d$ be the image of $T$. Since $T$ is an invertible 
linear map $\Omega$ is an open subset, and we define 
$S=T^{-1}$ to be the inverse of $T$. 
$S$ is then given by $(y_1,\dots,y_d)\mapsto (y_1-\sum_{i=2}^d y_i, y_2,\dots,y_d)$. 
Let $y_1\in \C$ be in the projection of $\Omega$ onto the first coordinate. For any 
$(y_2,\dots,y_d)$ with $(y_1,y_2,\dots,y_d)\in\Omega$, define $r(y_2,\dots,y_d)=(y_1,y_2,\dots,y_d)$. 
We have
$$
(Q\circ S\circ r)'=(-D_1Q+D_2Q,-D_1Q+D_3Q,\dots,-D_1Q+D_dQ)=(0,\dots,0),
$$
and so $Q\circ S\circ r$ is constant. But this means that for any 
$(x_1,\dots,x_d),(x_1',\dots,x_d')\in\Omega_1\times\dots\times\Omega_d$, with
$\sum x_i=\sum x_i'=y_1$, we have
\begin{align*}
Q(x_1,\dots,x_d)&=
Q(y_1-\sum_{i=2}^d x_i,x_2,\dots,x_d)\\
&=Q\circ S\circ r(x_2,\dots,x_d)\\
&=Q\circ S\circ r(x_2',\dots,x_d')\\
&=Q(x_1',\dots,x_d').
\end{align*}

Since this is true for every $y_1\in\Omega_1+\dots+\Omega_d$, we get the result. 
\end{proof}

Combining Lemma  \ref{lem2} with Lemma \ref{lem-additive} we see that Theorem \ref{additive} follows. 

\subsection{Proof of Lemma \ref{speform}}

We retain the notation of the previous section.  
Let $f$ be as in the statement of the lemma. By Theorem \ref{additive}, there exists 
an entire function $H:\C\to\C$ such that
$$
f(\gamma_1(1),\dots,\gamma_d(1))=H(\int_{\gamma_1}r_1+\int_{\gamma_2}r_2+\dots+\int_{\gamma_d}r_d),
$$
whenever $\gamma_1,\dots,\gamma_d:[0,1]\to\C$ 
are smooth curves with $\gamma_1(0)=\gamma_2(0)=\dots=\gamma_d(0)=0$ and 
 images not containing any pole of $r_1,\dots,r_d$.
 
We analyze the holomorphic function $H$. 

\begin{lem}
$H$ is periodic with respect to $\Gamma_1+\dots+\Gamma_d$.
\end{lem}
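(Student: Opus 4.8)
The plan is to show that each generating lattice $\Gamma_j$ is a period lattice of $H$; summing over $j$ then gives the claim. Fix an index $j$, say $j=1$ for concreteness, and fix a nonzero generator $g = 2\pi i\,\alpha_k$ of $\Gamma_1$ (the argument is identical for any generator, and $\Gamma_1$ is generated by such elements). The idea is to exploit the ambiguity in the curve $\gamma_1$ reaching a given endpoint: by \eqref{eq17} and the discussion following it, for a fixed endpoint $x_1\in\C$ that is not a pole of $r_1$, the value $\int_{\gamma_1}r_1$ ranges over the whole coset $c_{x_1}+\Gamma_1$ as $\gamma_1$ varies over smooth curves from $0$ to $x_1$ avoiding the poles of $r_1$. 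In particular there are curves $\gamma_1$ and $\gamma_1'$, both from $0$ to the \emph{same} endpoint $x_1$, with $\int_{\gamma_1'}r_1 = \int_{\gamma_1}r_1 + g$.

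Now fix arbitrary curves $\gamma_2,\dots,\gamma_d$ starting at $0$ and avoiding the poles of $r_2,\dots,r_d$ respectively. Since $f$ is a genuine polynomial function, its value at the point $(x_1,\gamma_2(1),\dots,\gamma_d(1))$ does not depend on which curve we used to arrive at $x_1$; hence
$$
H\!\left(\int_{\gamma_1}r_1 + \sum_{i=2}^d\int_{\gamma_i}r_i\right)
= f(x_1,\gamma_2(1),\dots,\gamma_d(1))
= H\!\left(\int_{\gamma_1'}r_1 + \sum_{i=2}^d\int_{\gamma_i}r_i\right)
= H\!\left(g + \int_{\gamma_1}r_1 + \sum_{i=2}^d\int_{\gamma_i}r_i\right).
$$
So $H(w) = H(w+g)$ for every $w$ of the form $\int_{\gamma_1}r_1 + \sum_{i\ge 2}\int_{\gamma_i}r_i$. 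By Lemma~\ref{tao2}, as $\gamma_1,\dots,\gamma_d$ vary, these $w$ range over a set that contains $\Omega_1+\dots+\Omega_d$ minus at most one coset of $\Gamma_1+\dots+\Gamma_d$; in particular this set contains a nonempty open subset of $\C$. Since $H$ is entire, the identity $H(w)=H(w+g)$ on a nonempty open set propagates to all of $\C$ by the identity theorem. Repeating this for every generator $2\pi i\,\alpha_k$ of $\Gamma_1$, and then for every $j=1,\dots,d$ in place of $1$, shows $H$ is invariant under translation by every generator of every $\Gamma_j$, hence under all of $\Gamma_1+\dots+\Gamma_d$.

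The only delicate point is the bookkeeping about which endpoints and cosets are actually realizable. Concretely: one must make sure that for the chosen endpoint $x_1$ there really exist two curves differing by exactly $g$ in the integral — this is precisely the "conversely" sentence following \eqref{eq17}, which guarantees that for any endpoint $x_1$ (not a pole of $r_1$) and any element of the coset $c_{x_1}+\Gamma_1$ there is a realizing curve. One must also track the degenerate case where $\Gamma_j$ is trivial (then there is nothing to prove for that index) or is a rank-one lattice (where Lemma~\ref{tao2} carries a one-coset caveat); but since we only need the realized values $w$ to fill \emph{some} nonempty open set, and removing at most one coset of a discrete group from an open set leaves a nonempty open set, the caveat is harmless. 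I expect this case analysis and the correct invocation of the "almost surjectivity" lemma to be the main obstacle; the analytic continuation step itself is routine.
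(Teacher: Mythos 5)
Your proof is correct and follows essentially the same approach as the paper: fix a period $\lambda\in\Gamma_j$, replace $\gamma_j$ by a different curve $\tilde\gamma_j$ to the same endpoint with $\int_{\tilde\gamma_j}r_j=\int_{\gamma_j}r_j+\lambda$, and use that $f$ is a well-defined function of its arguments (not of the curves) to conclude $H(w)=H(w+\lambda)$. The only cosmetic difference is that you establish the identity on a nonempty open set and then invoke the identity theorem for entire functions, whereas the paper argues directly that $\Omega_1+\cdots+\Omega_d=\C$ (each $\Omega_j$ having discrete complement), so that the realizable values $w$ already exhaust $\C$; both routes are valid and equally short.
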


\begin{proof}
Let $z\in\C$ and $\lambda\in\Gamma_1$. Take curves $\gamma_1,\dots,\gamma_d$ starting at $0$ so that
$\int_{\gamma_1}r_1+\dots+\int_{\gamma_d}r_d=z$. 
We may do this because each $\Omega_j$ has a discrete complement in $\C$. Now take $\tilde\gamma_1$ a curve starting at
$0$ so that $\tilde\gamma_1(1)=\gamma_1(1)$ and 
$\int_{\tilde\gamma_1}r_1=\int_{\gamma_1}r_1+\lambda$ (see the remark before  
Lemma \ref{tao2}). Then
$$
H(z)=f(\gamma_1(1),\dots,\gamma_d(1))=f(\tilde\gamma_1(1),\dots,\gamma_d(1))=H(z+\lambda).
$$
Thus $H$ is periodic with respect to $\Gamma_1$. Similarly it is periodic with respect to all $\Gamma_j$, and so periodic with respect
to $\Gamma_1+\dots+\Gamma_d$. 
\end{proof}

Since $H$ is periodic with respect to $\Gamma=\sum\Gamma_i$, we get that $\Gamma$ is discrete.
Indeed, otherwise, $H$ is constant on some convergent sequence, and, by the uniqueness principle, this implies that $H$ is constant on $\C$.
Thus, $f$ is constant on $\C^d$,
contradicting our assumption.

Notice also that $\Gamma$ cannot have rank $2$. Indeed, if $\Gamma$ has rank $2$ then $H$ attains its image on a compact
region in the plane, thus $H$ is bounded, thus $H$ is constant by Liouville's theorem, and we get a contradiction as before. 
Thus we may assume that either $\Gamma$ is trivial, or, without loss of generality, $\Gamma=2\pi i\Z$. We treat these two cases separately.

Suppose first that $\Gamma$ is trivial. That is, the $r_j$'s have no poles. In this case we have 
$$
f(x_1,\dots,x_d)=H(R_1(x_1)+\dots,R_d(x_d)),
$$
whenever $R_j(x_j)\ne\infty$ for each $j$. 

\begin{lem}
If $f(x_1,\dots,x_d)=H(R_1(x_1)+\dots+R_d(x_d))$ whenever $R_1(x_1)\ne\infty$, $\dots$, $R_d(x_d)\ne\infty$, then 
$H, R_1,\dots, R_d$ are polynomials. 
\end{lem}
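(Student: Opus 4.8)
The plan is to prove the statement in two steps: first that every $R_j$ is a polynomial (i.e.\ has no poles), and then, using this, that $H$ is a polynomial. Throughout I use the two facts already established just before the lemma, namely that $f$ is non-constant and that $H$ is non-constant.

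\emph{Step 1: each $R_j$ is a polynomial.} I would argue by contradiction. Suppose $R_1$ has a pole at some point $a\in\C$. Fix values $b_2,\dots,b_d$ for the remaining variables with each $b_i$ not a pole of $R_i$, and put $c:=R_2(b_2)+\dots+R_d(b_d)\in\C$. Then $f(x_1,b_2,\dots,b_d)=H(R_1(x_1)+c)$ for all $x_1$ in a small punctured disc $D^{*}$ around $a$. Since $1/R_1$ is holomorphic and non-constant near $a$ with a zero at $a$, the open mapping theorem shows that $R_1$ maps $D^{*}$ onto a set $\{|z|>M\}$, and hence $R_1(x_1)+c$ ranges over $\{|w|>M'\}$ for some $M'$. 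For each such $w$ there is an $x_1$ in the corresponding closed disc $\overline D$ with $H(w)=f(x_1,b_2,\dots,b_d)$; as $f$ is a polynomial it is bounded on the compact set $\overline D$ (with the last $d-1$ variables fixed), so $H$ is bounded on $\{|w|>M'\}$, hence on all of $\C$, hence constant by Liouville's theorem — contradicting that $H$ is non-constant. Therefore $R_1$, and by the same argument each $R_j$, has no poles and is a polynomial; consequently $f(x_1,\dots,x_d)=H(R_1(x_1)+\dots+R_d(x_d))$ now holds identically on $\C^d$.

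\emph{Step 2: $H$ is a polynomial.} Since $f$ is non-constant, at least one $R_j$, say $R_1$, is non-constant; write $m=\deg R_1\ge 1$. Setting $x_2=\dots=x_d=0$ and $c=R_2(0)+\dots+R_d(0)$ gives $H(R_1(x_1)+c)=f(x_1,0,\dots,0)$ for all $x_1\in\C$. Given $w\in\C$, the equation $R_1(x_1)=w-c$ has a root, and for $|w|$ large every root satisfies $|x_1|\le C|w|^{1/m}$, because $|R_1(x_1)|\ge \tfrac12|\mathrm{lead}(R_1)|\,|x_1|^{m}$ once $|x_1|$ is large. Since $f$ is a polynomial, $|f(x_1,0,\dots,0)|\le C'(1+|x_1|)^{\deg f}$, so $|H(w)|\le C''(1+|w|)^{\deg f/m}$ for $|w|$ large, and by continuity $|H(w)|\le C'''(1+|w|)^{N}$ on all of $\C$ with $N=\lceil \deg f/m\rceil$. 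A standard application of Cauchy's estimates then shows that an entire function of polynomial growth is a polynomial, so $H$ is a polynomial, which completes the proof.

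I expect the main obstacle to be Step 1: ruling out poles of the $R_j$ relies on the local behaviour of a rational function near a pole (surjectivity onto a neighbourhood of $\infty$) together with the already-established non-constancy of $H$, and one must take care that the preimage point $x_1$ remains in a region where $f$ is manifestly bounded. Once all the $R_j$ are known to be polynomials, Step 2 is a routine growth estimate transferring the polynomial bound on $f$ to $H$.
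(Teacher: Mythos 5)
Your proof is correct and takes essentially the same approach as the paper: rule out poles of $R_j$ by noting that near a pole $R_1$ hits every large value, so $H$ would be bounded on a neighborhood of $\infty$ and hence constant by Liouville; then deduce polynomial growth of $H$ and invoke the generalized Liouville theorem. The only cosmetic difference is that you obtain local surjectivity onto a neighborhood of $\infty$ via the open mapping theorem applied to $1/R_1$, whereas the paper packages the same fact as a corollary of Rouch\'e's theorem.
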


\begin{proof}
By Rouch\'e's theorem (see Corollary \ref{rouche_cor} in the Appendix), if, say, $R_1$ has a pole at some point $a_*\in\C$, then 
 there exists a closed disk $B$, with nonempty interior containing $a_*$, in which $R_1$ takes any sufficiently large value in $\C$. That is, 
\begin{equation}\label{imageR1}
R_1(B)\supset\C\setminus \tilde B,
\end{equation}
for some closed disk $\tilde B$ centered at $0$.
Consider the function $\varphi(x)=f(x,b_2,\ldots,b_d)$ restricted to $x\in B$, where $b_2,\ldots,b_d\in\C$ are some generic fixed constants. 
Since $f$ is a polynomial, $\varphi$ is continuous in $B$ and hence bounded.
 On the other hand, we have $\varphi(x)=H(R_1(x)+\sum_{j=2}^dR_j(b_j))$, for $x\in B$. In view of \eqref{imageR1}, this implies that $H$ is bounded on 
 $$
 \C\setminus (\tilde B+\sum_{j=2}^dR_j(b_j)).
 $$
By Liouville's theorem, 
$H$ must be  constant, which yields a contradiction.
Thus, $R_1$ is a rational function with no poles, and hence it is a polynomial. Applying a symmetric argument to each $j$, we conclude that   
$R_1,\ldots,R_d$ are polynomials. 
Finally, fixing  $x_2,\ldots,x_d$ and taking $x_1$ to go to infinity, we get that $H$ has a polynomial growth, 
and thus $H$ is a polynomial, by the generalized Liouville's theorem.    
\end{proof}

Next suppose that $\Gamma=2\pi i\Z$. 
In this case we have from \eqref{eq17} that for each of the $r_j$'s,  
$$
\int_{\gamma_j} r_j=\sum_{k=1}^m\alpha_k\Log{\frac{\gamma_j(1)-a_k}{a_k}}+R_j(\gamma_j(1)),
$$
where $\alpha_k$ are integers, and $\gamma_j$ is any curve avoiding all the poles of $r_j$. In other words,
we have 
$$
\int_{\gamma_j}r_j\in R_j(\gamma_j(1))+\log\tilde{R_j}(\gamma_j(1)),
$$
where
\begin{equation}\label{tildeRj}
\tilde R_j=\prod_k\left(\frac{x-a_k}{a_k}\right)^{\alpha_k}
\end{equation}
which is a rational function. 
Note that by the assumption that $\Gamma=2\pi i\Z$, we have that 
at least one of the $\tilde{R_j}$ is non-constant.
From Theorem \ref{additive} we get 

\begin{equation}\label{eq19}
f(x_1,\dots,x_d)=H(\sum_{j=1}^dR_j(x_j)+\log(\prod_{j=1}^d\tilde{R}_j(x_j))), 
\end{equation}
whenever $R_j(x_j)\ne\infty$ and $\tilde{R}_j(x_j)\ne0,\infty$ for all $j$. 
Notice that the right-hand side is well defined since $H$ is $2\pi i\Z$-periodic. This 
periodicity also lets us define $\tilde{H}:=H\circ\log$ in $\C^*$. Since every $z \ne0$ has a 
neighborhood where an analytic branch of
$\log$ is well defined, $\tilde{H}$ is holomorphic in $\C^*$. 
Then \eqref{eq19} becomes
\begin{equation}\label{eq18}
f(x_1,\dots,x_d)=\tilde{H}(\exp(\sum_j R_j(x_j))\prod_j\tilde{R}_j(x_j)).
\end{equation}

We claim that each of the rational functions $R_1,\dots,R_d$ is in fact a polynomial; the argument
is similar to the one given above for the case $\Gamma=0$. 

Suppose for contradiction that, say, 
$R_1$, has a pole at some point $a_*\in\C$. 
Then the function 
$$
x\mapsto \exp(R_1(x)+\sum_{j=2}^dR_j(b_j)),
$$ 
where $b_2,\ldots,b_d$ are generic constants fixed, has an
essential singularity at $a_*$. Then also
$$
x\mapsto\exp(R_1(x)+\sum_{i=2}^dR_j(b_j))\tilde{R_1}(x)\prod_{j=2}^d\tilde{R_j}(b_j)
$$
has an essential singularity. By Picard's Theorem (see Theorem~\ref{picard} in the Appendix),
there exists a punctured neighborhood of $a_*$ in which the latter function can take any sufficiently large value.
Thus $\log$ of the latter function can take any sufficiently large value in a neighborhood of $a_*$, 
and so $H$ is constant by Liouville's Theorem. This contradicts our assumption that $f$ is not constant, and so 
$R_1$ is a polynomial. Similarly, $R_2,\dots,R_d$ are polynomials, as claimed.

Similarly we claim that all of the $\tilde{R}_i$'s are polynomials.  
Suppose for contradiction that, say, $\tilde{R_1}$ has a pole at some point $a_*\in\C$.
Since $\exp$ has no zeros, and all of the $R_i$'s are polynomials, we have that 
$\exp(\sum_{i=1}^dR_i(x_i))\prod_{i=1}^d\tilde{R_i}(x_i)$, being viewed as a function of $x_1$, has a pole at $a_*$. 
This again implies that $H$ is constant, which yields a contradiction. 
Thus $\tilde{R_1},\ldots,\tilde{R_d}$ are polynomials. 

Next we show that the singularity of $\tilde{H}$ at $0$ is removable. 
Assume without lost of generality that $\tilde{R_1}$ is non-constant, and let $a^*$ be a zero of it. 
Fixing $x_2,\dots,x_d$, the function 
$$
x_1\mapsto\exp(\sum_{i=1}^dR_i(x_i))\prod_{i=1}^d\tilde{R_i}(x_i)
$$ 
attains any 
small enough non-zero value in a punctured neighborhood of $a_*$, by Rouch\'e's theorem (see Corollary \ref{rouche_cor} in the Appendix).  
Thus $\tilde{H}$ is bounded in a neighborhood of $0$, which implies $0$ is a removable singularity of $\tilde H$.
So $\tilde H$ can be extended to an entire function.

Finally, we show that each of the $R_i$'s is actually constant. 
Suppose $R_1$ is non-constant, then fixing $x_2,\dots,x_d$ to be some generic values, we see from 
Rouch\'e's theorem that any large enough $z\in\C$ can be represented as 
$\exp(\sum_{i=1}^dR_i(x_i))\prod_{i=1}^d\tilde{R_i}(x_i)$ for
some $x_1=O(\log|z|)$. Thus $\tilde{H}$ grows at most like a power of a logarithm, and so by the Generalized Liouville
Theorem, $\tilde{H}$ is a constant, which yields a contradiction. Thus each of $R_1,\ldots,R_d$ is constant. 
We may then 
absorb the $\exp(\sum_{j=1}^dR_j(x_j))$ factor into $\tilde H$, and get 
\begin{equation}\label{multform}
f(x_1,\dots,x_d)=\tilde{H}(\prod_{j=1}^d\tilde{R_j}(x_j)),
\end{equation}
where $\tilde{R_j}$ are polynomials. But then $\tilde{H}$ has a polynomial growth whence by generalized Liouville theorem
$\tilde{H}$ is a polynomial. This completes the proof of Lemma~\ref{speform} for the complex case.

We now prove the real case, which is a consequence of the following lemma.
\begin{lem}
Let $f(x_1,\ldots,x_d)$ be a real polynomial. Assume that $f$ is of one the forms 
\begin{align}
f(x_1,\ldots, x_d)&=h(p_1(x_1)+\cdots+p_d(x_d))~~\text{or}\label{form1}\\
f(x_1,\ldots, x_d)&=h(p_1(x_1)\cdot\ldots\cdot p_d(x_d)),\label{form2}
\end{align}
where $h,p_1,\ldots,p_d$ are complex univariate polynomials.
Then the polynomials $h,p_1,\ldots,p_d$ can be taken to be real.
\end{lem}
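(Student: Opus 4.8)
The plan is to start from a given complex representation of $f$ in one of the two forms and to show the complex polynomials can be replaced by real ones. The two forms are handled essentially the same way after taking logarithms in the multiplicative case, so I will concentrate on the additive form $f(x_1,\ldots,x_d)=h(p_1(x_1)+\cdots+p_d(x_d))$; the multiplicative case reduces to it by writing $p_1(x_1)\cdots p_d(x_d)=\exp(\log p_1(x_1)+\cdots+\log p_d(x_d))$ on a suitable domain and absorbing the exponential, or, more cleanly, by repeating the argument with the additive group replaced by the multiplicative one.

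First I would normalize the representation. By composing $h$ with an affine map and absorbing constants into the $p_i$, we may assume $p_i(0)=0$ for each $i$ (translating the real variables) and that $h(0)=f(0,\ldots,0)\in\R$ since $f$ is real. Next, differentiating $f=h(p_1+\cdots+p_d)$ in $x_i$ gives $\partial f/\partial x_i=h'(p_1+\cdots+p_d)\,p_i'(x_i)$, so the ratio $(\partial f/\partial x_i)/(\partial f/\partial x_j)=p_i'(x_i)/p_j'(x_j)$ is forced; since $f$ is real, the left side is a real rational function of $(x_i,x_j)$, which pins down each $p_i'$ up to a single global complex scalar. Concretely, fixing one index, say $i=1$, there is $\lambda\in\C^*$ with $p_i'=\lambda\,\rho_i$ for real polynomials $\rho_i$ (with $\rho_1$ monic, say), hence $p_i(x_i)=\lambda\,P_i(x_i)$ for real polynomials $P_i$ with $P_i(0)=0$. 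Then $f(x_1,\ldots,x_d)=h\big(\lambda(P_1(x_1)+\cdots+P_d(x_d))\big)=:\tilde h(P_1(x_1)+\cdots+P_d(x_d))$, where $\tilde h(w):=h(\lambda w)$ is still a complex polynomial but now $f$ is a polynomial in the single real polynomial quantity $w=P_1(x_1)+\cdots+P_d(x_d)$.

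It remains to show $\tilde h$ has real coefficients. Choose real values $a_1,\ldots,a_d$ so that the map $(x_1,\ldots,x_d)\mapsto P_1(x_1)+\cdots+P_d(x_d)$ is non-constant in, say, the variable $x_1$ near $(a_1,\ldots,a_d)$ — possible because $f$ depends non-trivially on $x_1$, so $P_1$ is non-constant. Then $x_1\mapsto \tilde h(P_1(x_1)+c)$, with $c=\sum_{j\ge 2}P_j(a_j)\in\R$, is a real-valued function of the real variable $x_1$ on an interval (since $f$ is real), and equals a polynomial in $x_1$ with coefficients that are polynomial expressions in the coefficients of $\tilde h$; a real-valued polynomial function on an interval has real coefficients. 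Because $w\mapsto \tilde h(w)$ takes real values on the infinite set $\{P_1(t)+c : t\in\R\}\subset\R$, and $\tilde h$ is a polynomial, $\tilde h$ itself must have real coefficients (a polynomial real on infinitely many real points is real). Thus $h$ can be taken real (namely $\tilde h$), and $p_i=P_i$ real, after renaming. For the multiplicative form the same scheme applies: one gets $p_i(x_i)=\lambda P_i(x_i)$ with $P_i$ real, $\prod p_i(x_i)=\lambda^d\prod P_i(x_i)$, and absorbs $\lambda^d$ into $h$, then argues that the resulting univariate polynomial is real-valued on infinitely many reals.

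The main obstacle is the book-keeping around the global scalar $\lambda$ and the degenerate cases: one must ensure the ratio argument actually isolates the $p_i'$ up to a \emph{single} common scalar (not one scalar per index), which uses that the mixed ratios $p_i'(x_i)/p_j'(x_j)$ are simultaneously real rational functions and therefore force all the $p_i'$ into a common complex line; and in the multiplicative case one must handle the possibility that some $p_i$ is constant or has complex roots, checking that $\prod p_i$ real still forces, via the same root-matching, that each $p_i$ is a complex scalar times a real polynomial. None of this is deep, but it is where the care is needed.
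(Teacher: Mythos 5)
Your proposal is correct in outline and is a genuinely more elementary argument than the paper's, which re-runs the full complex-analytic machinery from the proof of Lemma~\ref{speform} (with the explicit choices $r_j=p_j'$, resp.\ $r_j=p_j'/p_j$) to re-derive the additive/multiplicative forms before showing the pieces can be taken real. Your route works directly from the given decomposition. Both proofs share the essential ingredients: separation of variables in the ratio $\tfrac{\partial f}{\partial x_i}\big/\tfrac{\partial f}{\partial x_j}$ to pin down $p_i'$ (resp.\ $p_i'/p_i$) as a complex scalar times a real polynomial (resp.\ real rational function), and the observation that a complex polynomial which is real-valued on an infinite real set has real coefficients (the paper phrases this as the imaginary-part polynomial $H_2$ vanishing identically). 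Your additive case is clean and complete modulo the degenerate checks you flag; what your version buys is bypassing the re-derivation of the form via integrals and entire functions.

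Two corrections are needed in the multiplicative case. First, the ratio argument there controls $p_i'/p_i$, not $p_i$ itself: the logarithmic derivative is insensitive to rescaling $p_i$ by a nonzero constant, so you do \emph{not} obtain a single $\lambda$ with $p_i=\lambda P_i$; you obtain $p_i=\mu_i P_i$ with $P_i$ real and $\mu_i\in\C^*$ possibly differing across $i$. The fix is trivial (absorb $\prod_i\mu_i$ rather than $\lambda^d$ into $h$), but as written the claim is false. Second, the phrase ``$\prod p_i$ real still forces'' is misleading --- you are only given that $h(\prod p_i)$ is real, not that $\prod p_i$ is. The correct reason each $p_i$ is a scalar times a real polynomial here is that $p_i'/p_i=c_i\tilde r_i$ with $\tilde r_i$ real rational, the residues of $p_i'/p_i$ are the (positive integer) root multiplicities of $p_i$, and the conjugate-pole symmetry of the real function $\tilde r_i$, combined with positivity of the residues, forces $c_i\in\R$ and equal multiplicities at conjugate roots. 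This is exactly the Claim the paper proves, and it is the genuine content of the multiplicative case; it should be spelled out rather than compressed to ``root-matching.''
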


\begin{proof}
We repeat the proof of Lemma~\ref{speform} with a concrete choice of the functions $r_1,\ldots,r_d$ that satisfy the differential equation \eqref{de}.
We set 
\begin{equation}\label{newrj1}
r_j:=p_j'(x_j),
\end{equation}
if $f$ has the form \eqref{form1}, and
\begin{equation}\label{newrj2}
r_j:=\frac{p_j'(x_j)}{p_j(x_j)}
\end{equation}
in case $f$ has the form \eqref{form2}.
One can easily check that for every $i,j$ 
\begin{equation}\label{quotient}
\frac{\frac{\partial f}{\partial x_i}}{\frac{\partial f}{\partial x_j}}=\frac{r_i}{r_j},
\end{equation}
and so the differential equation \eqref{de} holds in each of the cases. 
Note that since $f$ is now assumed to be real, the rational function $\frac{r_i}{r_j}$ is also real. 
Since the rational functions $r_i,r_j$ do not have a common factor, we get from \eqref{quotient} 
that $r_j=c_j\tilde r_j$, where $c_j\in\C$ and $\tilde r_j$ is a real rational function. This is true for each $j=1,\ldots,d$.
Moreover, we may take $c_1=\cdots=c_d=:c$ with this property, as is not hard  to verify.

We now repeat the proof of Lemma~\ref{speform}, where for the assumption we use  $r_1,\ldots,r_d$ from above. 
Suppose we are in the additive case, i.e., we take the $r_j$'s to be as in \eqref{newrj1}. Since in this case the $r_j$'s have no poles (and so the discrete group $\Gamma$ is trivial), we get from the proof of Lemma~\ref{speform} that $f$ has the form
$$
f(x_1,\ldots,x_d)=H(c(R_1(x_1)+\cdots+R_d(x_d))),
$$
where $R_j:=\int \tilde r_j$ and $R_1,\ldots,R_d$ are real polynomials.  
Absorbing $c$ to the function $H$, and by some abuse of notation, we get 
$$
f(x_1,\ldots,x_d)=H(R_1(x_1)+\cdots+R_d(x_d)).
$$

We claim that the polynomial $H$ can be taken to be real.
Write 
$$f(x_1,\dots,x_d)=H_1(R_1(x_1)+\dots+R_d(x_d))+iH_2(R_1(x_1)+\dots+R_d(x_d)),$$ for $H_1,H_2$ polynomials with real coefficients.
Since $f$ is real, the function $(x_1,\ldots,x_d)\mapsto H_2(R_1(x_1)+\dots+R_d(x_d))$ is zero for every $(x_1,\ldots,x_d)\in \R^d$, which implies that 
$(x_1,\ldots,x_d)\mapsto H_2(R_1(x_1)+\dots+R_d(x_d))$ is the zero function over $\C^d$ too. So we have 
$$f(x_1,\dots,x_d)=H_1(R_1(x_1)+\dots+R_d(x_d)),$$ for $H_1,R_1,\ldots,R_d$ real polynomials. This proves the additive case.

Suppose next that we are in the multiplicative case, i.e., that we take the $r_j$'s to be as in \eqref{newrj2}; in this case the $r_j$'s have some simple poles. 
Note that, for each $j$, the residues of $r_j$ are positive integers. 
Indeed, the residue of $r_j$ at, say, $a^*$ is equal by definition to  
$$
\frac1{2\pi i}\int_\gamma r_j(z)dz=\frac1{2\pi i}\int_\gamma \frac{p_j'(z)}{p_j(z)}dz,
$$ where $\gamma$ is the bundary of a small disk around $a^*$. 
Then, by the argument principle, this equals $N-P$, where $N$ is the number of zeros of $p_j$ in the disk, and $P$ is the number of poles of $p_j$.
Thus, as $p_j$ has no poles, the residues of $r_j$ are positive integers.

\begin{clm}
Let $r\in\{r_1,\ldots,r_j\}$. Suppose $a\in \C\setminus \R$ is a simple pole of $r$ with residue $\alpha$. Then $\bar{a}$ is also a simple pole of $r$ and has the same residue $\alpha$.
\end{clm}
\begin{proof}
Recall that $r=c\tilde r$, for some real rational function $\tilde r$. 
Since $r$ and $\tilde r$ have the same simple poles, and $\tilde r$ is real, $a, \bar{a}$ are simple poles of both $r$ and $\tilde r$.

Since $\tilde{r}$ is real, its residues at $a$ and 
$\bar{a}$ are conjugate to each other. Indeed, note that 
${\rm res}(\tilde{r},a)
=(z-a)\tilde r(z)\mid_{z=a}$.
Writing $\tilde{r}(z)=\frac{1}{(z-a)(z-\bar{a})}u(z)$, 
where $u$ is a rational function with real coefficients, we get
\begin{align*}
{\rm res}(\tilde{r},a)
=\frac{1}{z-\bar{a}}u(z)|_{z=a}
=\frac1{a-\bar{a}}u(a)
=\overline{\frac1{\bar{a}-a}\overline{u(a)}}
=\overline{\frac{1}{\bar{a}-a}{u(\bar{a})}}=\overline{{\rm res}(\tilde{r},\overline{a})}.
\end{align*}

Let $\alpha_1, \alpha_2$ denote the residues of $r$ corresponding to $a,\bar a$.
Then $\alpha_1=c\beta$ and $\alpha_2=c\bar\beta$, where $\beta={\rm res}(\tilde{r},a)$.
Recall that, as argued above, $\alpha_1,\alpha_2$ are positive integers, and in particular $\alpha_1, \alpha_2\in \R$. Using this, we get
$$\frac{\alpha_1}{c}=\frac{\bar{\alpha_2}}{\bar{c}}=\frac{\alpha_2}{\bar{c}}.$$
Thus $\frac{c}{\bar{c}}\in\R$ and so $c\in \R\cup i\R$. 
If $c\in i\R$ then $c=-\bar{c}$ and we get from the above equality that
$\alpha_1=-\alpha_2$. Since both $\alpha_1$ and $\alpha_2$ are positive, this is a contradiction. Thus $c\in \R$ and 
$\alpha_1=\alpha_2$ as claimed.
\end{proof}

Recall from the proof of Lemma~\ref{speform} that in the multiplicative case we get that 
$f$ is of the form
$$
f(x_1,\dots,x_d)=\tilde{H}(\prod_{j=1}^d\tilde{R_j}(x_j)),
$$
where  $$
\tilde R_j(x)=\prod_k\left(\frac{x-a_k}{a_k}\right)^{\alpha_k},$$
and where $a_k$ is a simple pole of $r_j$ with residue $\alpha_k$.
Note that the claim above implies that $\tilde R_j$ is a real polynomial for each $j$.

Now, as in \eqref{multform} we get that 
$$
f(x_1,\dots,x_d)=\tilde{H}(\prod \tilde{R_j}).
$$ 
We now need to show that $\tilde{H}$ can be taken to be with real coefficients.
The argument is similar to the one from the additive case.
Write 
$$
f(x_1,\dots,x_d)=\tilde{H_1}(\prod \tilde{R_j})+i\tilde{H_2}(\prod \tilde{R_j}),
$$ 
for $\tilde H_1,\tilde H_2$ real.
Since $f$ is real, the function $(x_1,\ldots,x_d)\mapsto \tilde{H_2}(\prod \tilde{R_j})$ must be identically zero.
So $$
f(x_1,\dots,x_d)=\tilde{H_1}(\prod \tilde{R_j}),
$$ 
with $\tilde H_1,\tilde R_1,\ldots,\tilde R_d$ real.
This proves the lemma.
\end{proof}

\appendix

\section{Properties from complex analysis theory}
In this section we provide some background in complex analysis theory, needed in Section~\ref{sec:speform}. For more details see e.g. the book~\cite{Gam}.

\paragraph{Notation.}
For any element $0\ne z\in\C$ we denote by $\log(z)$ the subset  of $\C$,
 consisting of all the solutions to the equation
$e^w=z$. That is, $\log(z)$ is the standard multiple-valued complex logarithm function.
Recall that 
$$
\log(z)=\ln|z|+i\text{Arg}(z)+2\pi i\mathbb{Z}.
$$
When fixing a specific branch of logarithm, we use the standard notation $\Log(z)$.   


\begin{thm}[Generalized Liouville's theorem]\label{liouv}
Let $f:\C\to\C$ be an entire function, and suppose that $|f(z)|\le M|z^n|$ for some $M>0$. Then  
$f$ is a polynomial of degree at most $n$. 
\end{thm}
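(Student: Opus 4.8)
The plan is to recover $f$ from its Taylor coefficients and bound them using the growth hypothesis, exactly as in the classical proof of Liouville's theorem. Since $f$ is entire, it is represented by a power series $f(z)=\sum_{k\ge0}a_kz^k$ that converges on all of $\C$, and by the Cauchy integral formula each coefficient satisfies
$$
a_k=\frac{1}{2\pi i}\int_{|z|=R}\frac{f(z)}{z^{k+1}}\,dz
$$
for every $R>0$ (or for every sufficiently large $R$, if the bound $|f(z)|\le M|z|^n$ is only assumed for $|z|$ large).

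First I would estimate this integral directly: parametrizing the circle of radius $R$ and using $|f(z)|\le M|z|^n=MR^n$ on $|z|=R$, the standard length-times-supremum bound gives
$$
|a_k|\le\frac{1}{2\pi}\cdot(2\pi R)\cdot\frac{MR^n}{R^{k+1}}=MR^{\,n-k}.
$$
Next, for any index $k>n$ the exponent $n-k$ is negative, so letting $R\to\infty$ forces $|a_k|\le MR^{\,n-k}\to 0$, hence $a_k=0$. Consequently $f(z)=\sum_{k=0}^{n}a_kz^k$, which is a polynomial of degree at most $n$, as claimed.

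There is no real obstacle here; the argument is entirely standard, and the only point requiring a line of care is the quantifier in the hypothesis. If $|f(z)|\le M|z|^n$ is only valid outside some disk $|z|\le R_0$, one simply restricts the computation above to radii $R>R_0$, which changes nothing, since the conclusion $a_k=0$ for $k>n$ is obtained by taking $R\to\infty$ in any case.
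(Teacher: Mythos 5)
Your proof is correct and is the standard Cauchy-estimate argument. The paper does not actually prove Theorem~\ref{liouv}; it is stated without proof in the appendix as background from complex analysis, with the reader referred to the textbook~\cite{Gam}. So there is nothing in the paper to compare against, but your argument is exactly what a textbook proof would give: expand $f$ in its everywhere-convergent Taylor series, bound $|a_k|$ by $MR^{n-k}$ via the Cauchy integral formula on $|z|=R$, and let $R\to\infty$ to kill every coefficient with $k>n$. Your closing remark about the quantifier is also well-placed, since as literally stated the bound $|f(z)|\le M|z|^n$ at $z=0$ would force $f(0)=0$, whereas the intended hypothesis (and the one actually used in Section~\ref{sec:speform}, where the growth bound is only established for large $|z|$) is a bound holding outside some disk; your observation that the argument only needs large $R$ handles this cleanly.
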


\begin{thm}[Uniqueness principle]\label{uniq}
Let $f:\C\to\C$ be an entire function, and suppose that $f$ 
is constant on a set $S$ which has an accumulation point, then $f$ is constant.  
\end{thm}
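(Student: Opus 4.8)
The plan is to reduce the statement to the classical identity theorem for holomorphic functions and then prove the latter by a local-to-global argument. Let $c$ denote the common value $f(z)=c$ for $z\in S$, and set $g:=f-c$, which is entire and vanishes on $S$. Let $p$ be an accumulation point of $S$; pick a sequence $z_n\in S$ with $z_n\neq p$ and $z_n\to p$. By continuity $g(p)=0$ as well, so without loss of generality $p\in S$.

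The first key step is to show that $g$ vanishes on a whole neighborhood of $p$. Since $g$ is entire, near $p$ it has a convergent Taylor expansion $g(z)=\sum_{k\ge 0}a_k(z-p)^k$. Suppose for contradiction that $g$ is not identically zero near $p$; then there is a least index $k_0$ with $a_{k_0}\neq 0$, and we may write $g(z)=(z-p)^{k_0}h(z)$ on a disc around $p$, where $h$ is holomorphic there and $h(p)=a_{k_0}\neq 0$. Evaluating at $z_n$ gives $0=g(z_n)=(z_n-p)^{k_0}h(z_n)$, and since $z_n\neq p$ we get $h(z_n)=0$ for all $n$; letting $n\to\infty$ and using continuity of $h$ yields $h(p)=0$, a contradiction. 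Hence all $a_k$ vanish and $g\equiv 0$ on some open disc $D$ centered at $p$.

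The second key step is a connectedness argument to propagate this to all of $\C$. Let $\Omega:=\{z\in\C\mid g^{(k)}(z)=0\text{ for all }k\ge 0\}$. This set is nonempty (it contains $p$, by the previous paragraph), and it is closed because each $g^{(k)}$ is continuous. It is also open: if $z_0\in\Omega$, then the Taylor series of $g$ at $z_0$ is identically zero, so $g$ vanishes on a disc around $z_0$, whence so do all its derivatives, giving a neighborhood of $z_0$ inside $\Omega$. Since $\C$ is connected and $\Omega$ is nonempty, open, and closed, we conclude $\Omega=\C$, so in particular $g\equiv 0$, i.e. $f\equiv c$.

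I do not anticipate a genuine obstacle here, as this is the standard proof of the identity theorem; the only point requiring mild care is the first step, where one must correctly extract the lowest-order term of the Taylor expansion and invoke continuity of the quotient $h$ to derive the contradiction. One could alternatively phrase the propagation step via the observation that $\{z : g\equiv 0 \text{ on a neighborhood of } z\}$ is open and closed, but the set-of-zeros-of-all-derivatives formulation makes closedness most transparent.
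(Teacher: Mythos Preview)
Your argument is correct and is precisely the standard proof of the identity theorem (factor out the lowest-order nonvanishing Taylor term to get an isolated zero, derive a contradiction, then use that the set where all derivatives vanish is nonempty, open, and closed in the connected domain $\C$). The paper, however, does not prove this statement at all: it is listed in the appendix as background from complex analysis, with a pointer to Gamelin's textbook, so there is nothing to compare against beyond noting that your proof is the textbook one.
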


\begin{thm}[Rouch\'e]\label{rouche}
Let $f$ and $g$ be analytic function on a simple region $D\subset\C$. If $|f(z)|>|g(z)|$ for every 
$z$ in the boundary of $D$, then $f$ and $f+g$ have the same number of zeros in $D$.
\end{thm}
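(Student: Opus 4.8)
The plan is to derive Rouch\'e's theorem from the argument principle together with a homotopy argument. First I would record the standing interpretation that $D$ is a bounded domain whose boundary $\partial D$ consists of finitely many piecewise-smooth simple closed curves, positively oriented, so that for any function $h$ analytic on (a neighborhood of) $\overline D$ with no zeros on $\partial D$, the argument principle gives
$$
N(h):=\#\{\text{zeros of }h\text{ in }D,\text{ counted with multiplicity}\}=\frac{1}{2\pi i}\int_{\partial D}\frac{h'(z)}{h(z)}\,dz .
$$
The next observation is that the strict inequality $|f(z)|>|g(z)|$ on $\partial D$ makes the argument principle applicable both to $f$ and to $f+g$: indeed $f$ is visibly zero-free on $\partial D$, and $|f(z)+g(z)|\ge |f(z)|-|g(z)|>0$ there as well.

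For the homotopy step, set $h_t(z)=f(z)+t\,g(z)$ for $t\in[0,1]$. On $\partial D$ one has $|h_t(z)|\ge |f(z)|-t|g(z)|\ge |f(z)|-|g(z)|>0$, so every $h_t$ is zero-free on $\partial D$, and the map $(t,z)\mapsto h_t'(z)/h_t(z)$ is jointly continuous on $[0,1]\times\partial D$. Hence $t\mapsto N(h_t)=\frac{1}{2\pi i}\int_{\partial D}h_t'(z)/h_t(z)\,dz$ is continuous in $t$; being integer-valued it is constant, and therefore $N(f)=N(h_0)=N(h_1)=N(f+g)$, which is exactly the assertion. An alternative that avoids invoking continuity of a parameter integral is to write $f+g=f\cdot(1+g/f)$ on a neighborhood of $\partial D$, where $g/f$ is analytic since $f$ is zero-free there; by the argument principle $N(f+g)-N(f)$ equals the winding number about $0$ of the curve $\phi(z)=1+g(z)/f(z)$, $z\in\partial D$, and since $|g/f|<1$ on $\partial D$ this curve lies in the open disk $\{w:|w-1|<1\}$, which omits the origin, so its winding number is $0$.

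I do not expect a genuine obstacle here, as this is the standard proof; the only point requiring a little care is pinning down what ``simple region'' means so that the stated form of the argument principle holds verbatim (bounded domain, finitely many piecewise-smooth simple closed boundary curves), after which the computation is routine. If one prefers to keep the statement as weak as in the excerpt, the winding-number version is the cleanest, since it isolates the single geometric fact that a curve staying within distance $1$ of the point $1$ cannot wind around $0$.
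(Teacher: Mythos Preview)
Your proof is correct and is in fact the standard textbook argument for Rouch\'e's theorem. Note, however, that the paper does not supply its own proof of this statement: Theorem~\ref{rouche} appears in the Appendix as a piece of background from complex analysis, with a reference to Gamelin's book and no argument given. So there is nothing to compare against; your homotopy/argument-principle derivation (and the alternative winding-number version you sketch) would serve perfectly well if the paper had chosen to include a proof.
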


\begin{cor}\label{rouche_cor}
Let $f:\C\to\C$ be a meromorphic function with a pole at $z_*$, and take $\epsilon>0$ such that the only pole of $f$ in
$B:=|z-z_*|\le\epsilon$ is $z_*$. For any large enough $w\in\C$, there is a solution $z\in B$ to the equation $f(z)=w$.
If $f$ is analytic and has a zero at $z_*$, then $f$ can take any small enough value $z\ne0$ in a neighborhood of $z_*$. 
\end{cor}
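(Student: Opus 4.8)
The plan is to prove the two assertions in turn, deducing the first (about a pole) from the second (about a zero) by passing to $1/f$; the only subtlety is that Theorem~\ref{rouche} is stated for analytic functions, so I must avoid applying Rouch\'e directly to the meromorphic function $f-w$.

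First I would prove the second assertion. Assume $f$ is analytic in a neighborhood of $z_*$ with $f(z_*)=0$; we may assume $f\not\equiv0$, so the zero is isolated, and after shrinking $\epsilon$ we may assume $z_*$ is the only zero of $f$ in the closed disk $\overline B=\{|z-z_*|\le\epsilon\}$. Let $p\ge1$ be its order and set $m:=\min_{|z-z_*|=\epsilon}|f(z)|$, which is positive since $f$ is continuous and nonvanishing on the boundary circle. Fix any $w$ with $0<|w|<m$ and apply Theorem~\ref{rouche} on $D=\{|z-z_*|<\epsilon\}$ to the analytic functions $f$ and the constant function $-w$: on $\partial D$ we have $|f(z)|\ge m>|w|=|-w|$, so $f$ and $f-w$ have the same number of zeros in $D$, namely $p\ge1$. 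Hence $f(z)=w$ for some $z\in D$, which proves the second assertion (with ``small enough value $w\ne0$'' meaning $0<|w|<m$).

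Next I would reduce the first assertion to the second. Suppose $f$ is meromorphic with a pole at $z_*$ and no other pole in $\overline B=\{|z-z_*|\le\epsilon\}$. Then $g:=1/f$ is analytic in a neighborhood of $z_*$, with $g(z_*)=0$ and $g\not\equiv0$. By the second assertion applied to $g$, there is $\delta>0$ and a radius $\epsilon'$ with $0<\epsilon'\le\epsilon$ such that, writing $B':=\{|z-z_*|\le\epsilon'\}\subset B$, every $w'$ with $0<|w'|<\delta$ equals $g(z)$ for some $z\in B'$. Given $w$ with $|w|>1/\delta$, set $w'=1/w$; then $0<|w'|<\delta$, so $g(z)=w'$ for some $z\in B'\subset B$, whence $f(z)=1/g(z)=w$. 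This proves the first assertion.

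The only place requiring care is precisely the passage through $g=1/f$ instead of invoking Rouch\'e on the meromorphic function $f-w$, together with shrinking $\epsilon$ so that $z_*$ is the unique zero (resp. so that $g$ is analytic) on the disk in play; beyond that the argument is routine, and I anticipate no genuine obstacle.
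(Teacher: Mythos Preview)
Your proof is correct, but it runs in the opposite direction from the paper's. The paper establishes the pole case first: it clears the pole by multiplying through by $(z-z_*)^N$, writes $(z-z_*)^N f(z)=a_N+(z-z_*)h(z)$ with $h$ analytic in $B$, and then applies Rouch\'e on $\partial B$ to the pair $-(z-z_*)^N w+(z-z_*)h(z)$ and the constant $a_N$ (for $|w|$ large the former dominates), obtaining a zero $z_0\neq z_*$ of $(z-z_*)^N(f(z)-w)$ and hence $f(z_0)=w$; only afterward does it deduce the zero case by passing to $1/f$. You instead prove the zero case directly via the simpler comparison $|f|>|-w|$ on the boundary circle and then reduce the pole case to it by setting $g=1/f$.

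Both routes are valid. Yours avoids the Laurent-series bookkeeping and gives a cleaner Rouch\'e step. The paper's version has the minor cosmetic advantage that it works directly on the given disk $B$ without first shrinking to an auxiliary $B'\subset B$ (needed in your argument to ensure $g=1/f$ is analytic, since $f$ may have zeros in $B\setminus\{z_*\}$); but since any solution in $B'$ lies in $B$, this does not affect the stated conclusion.
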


\begin{proof}
Write 
$$
f(z)=\sum_{j=1}^N\frac{a_j}{(z-z_*)^j}+g(z),
$$
where $a_N\ne0$, and $g(z)$ is a meromorphic function with no pole at $z_*$. Multiplying by $(z-z_*)^N$
we get 
$$
(z-z_*)^Nf(z)=a_N+(z-z_*)h(z), 
$$
where $h(z)$ is analytic in $B$. 
On the boundary of $B$ we have 
$$
|a_N|<|-(z-z_*)^Nw+(z-z_*)h(z)|,
$$
for every $w$ large enough.
Thus by Rouch\'e's Theorem (Theorem~\ref{rouche}), the functions $z\mapsto (z-z_*)^Nw-(z-z_*)h(z)$  and $z\mapsto a_N-(z-z_*)^Nw+(z-z_*)h(z)$ have the same number of zeros in $B$. 
In particular,  $z\mapsto a_N-(z-z_*)^Nw+(z-z_*)h(z)$ has at least one zero $z_0\in B$.  Since $a_N\ne0$ we have $z_0\neq z_*$, and so we can divide by $(z_0-z_*)^N$ to get
$$
\frac{a_N}{(z_0-z_*)^N}-w+\frac{h(z_0)}{(z_0-z_*)^{N-1}}=0.
$$
Recalling the definition of $h(z)$, this means $w=f(z_0)$.
Thus we have proved the first part of the corollary. For the second part apply the first part to the function $z\mapsto \frac{1}{f(z)}$. 
\end{proof}

\begin{thm}[Great Picard's Theorem]~\label{picard}
Suppose that a function $f:\C\to\C$ has essential singularity at a point $z_*\in\C$. Then on any 
punctured neighborhood of $z_*$, $f$ takes all possible complex values, with at most a single exception, 
infinitely often. 
\end{thm}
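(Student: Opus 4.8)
The plan is to deduce the Great Picard Theorem from Montel's fundamental normality criterion --- a family of holomorphic functions on a domain, each of which omits two fixed values, is a normal family --- which I will take as a cited external fact; this is the only ingredient not already present in the appendix, and it is where the real depth of the statement lies (Montel's criterion is itself a consequence of the theory of the elliptic modular function, equivalently of Schottky's theorem). After translating so that $z_*=0$, suppose $f$ is holomorphic on a punctured disk $D^*=\{0<|z|<r\}$ with an essential singularity at $0$. The proof has two parts: first, that $f$ omits at most one value on $D^*$; second, a short bootstrap from this to the ``infinitely often'' conclusion.

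\emph{Step 1 (at most one omitted value).} Suppose for contradiction that $f$ omits two distinct values $a\ne b$ on $D^*$. Passing to the affine image $F:=(f-a)/(b-a)$, which does not change the type of the singularity, we may assume $F$ is holomorphic on $D^*$ and omits $0$ and $1$. For $n\ge 1$ put $F_n(z):=F(2^{-n}z)$ on the fixed annulus $A=\{\tfrac12<|z|<1\}$; each $F_n$ is holomorphic on $A$ and omits $0$ and $1$, so $\{F_n\}$ is normal on $A$ by Montel's criterion. Pass to a subsequence $F_{n_k}$ converging locally uniformly in the spherical metric to a limit $H$ on $A$; by the standard dichotomy for limits of normal families of holomorphic functions, $H$ is either $\C$-valued and holomorphic or identically $\infty$. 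If $H$ is holomorphic, then $|F_{n_k}|$ is uniformly bounded on the circle $|z|=\tfrac34$, hence $|F|\le M$ on each circle $|w|=\tfrac34\cdot 2^{-n_k}$ for $k$ large; applying the maximum modulus principle to the closed annuli bounded by consecutive such circles gives $|F|\le M$ on a punctured neighborhood of $0$, so $0$ is a removable singularity of $F$, a contradiction. If $H\equiv\infty$, we run the same argument on $1/F$, which is holomorphic and finite on all of $D^*$ precisely because $F$ never vanishes there: now $|1/F|\le 1$ on the circles $|w|=\tfrac34\cdot 2^{-n_k}$ for $k$ large, hence on a punctured neighborhood of $0$, so $1/F$ has a removable singularity at $0$, forcing $F$ to have at worst a pole at $0$ --- again contradicting that $0$ is essential. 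Since this argument uses only that $0$ is an essential singularity of $f$ on the relevant punctured disk, it applies to $f$ restricted to \emph{any} punctured neighborhood of $z_*$.

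\emph{Step 2 (``infinitely often'').} Suppose two distinct values $w_1\ne w_2$ were each attained by $f$ only finitely often on $D^*$. Then $f^{-1}(\{w_1,w_2\})\cap D^*$ is finite, so there is $0<\delta<r$ with this set disjoint from $D^{**}:=\{0<|z|<\delta\}$; thus $f$ omits both $w_1$ and $w_2$ on $D^{**}$. But $0$ is still an essential singularity of $f|_{D^{**}}$, so this contradicts Step 1. Hence at most one value is attained finitely often, i.e.\ every value with at most one exception is attained infinitely often; and since every punctured neighborhood of $z_*$ contains such a $D^*$, the theorem follows.

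\emph{Main obstacle.} The analytic content is concentrated entirely in Montel's fundamental normality criterion (equivalently, in the modular function / Schottky's theorem); the rest is routine use of the maximum modulus principle, Riemann's removable-singularity theorem, and the classification of isolated singularities. If one insisted on avoiding Montel, the alternative is the classical monodromy argument: lift $f\colon D^*\to\C\setminus\{0,1\}$ through the universal covering $\lambda\colon\mathbb{H}\to\C\setminus\{0,1\}$ after passing to the universal cover of $D^*$, and combine the bounded holomorphic lift with Schwarz--Pick and Landau-type estimates to control $f$ near $0$; this is more self-contained but substantially longer, and I would not pursue it here.
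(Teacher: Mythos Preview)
The paper does not actually prove this theorem: it is listed in the Appendix as background from complex analysis, stated without proof and referred to Gamelin's textbook~\cite{Gam}. So there is no ``paper's own proof'' to compare against.

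Your argument is correct and is one of the standard modern proofs. Deducing Great Picard from Montel's fundamental normality criterion via the rescalings $F_n(z)=F(2^{-n}z)$ on a fixed annulus, splitting on whether the normal limit is finite or $\equiv\infty$, and then applying the maximum principle on nested annuli together with Riemann's removable-singularity theorem, is exactly the Julia--Montel route; the bootstrap in Step~2 to the ``infinitely often'' conclusion is also the standard one. One very minor point of presentation: in the $H\equiv\infty$ case you might say explicitly that the limit value of $1/F$ at $0$ could be zero or nonzero, giving respectively a pole or a removable singularity for $F$, both of which contradict essentiality---you clearly intend this, but spelling it out costs only a clause. Since the paper merely quotes the theorem, supplying this proof would in fact go beyond what the paper does.
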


\end{document}